\newcommand{\al}{\alpha}
\newcommand{\R}{\mathbb{R}}
\newcommand{\N}{\mathbb{N}}
\newcommand{\Si}{\Sigma}
\newcommand{\de}{\delta}
\newcommand{\ep}{\epsilon}
\newcommand{\Ga}{\Gamma}
\newcommand{\ga}{\gamma}
\newcommand{\lap}{\triangle}
\newcommand{\rom}[1]{\expandafter\romannumeral #1}
\newcommand{\Rom}[1]{\uppercase\expandafter{\romannumeral #1}}
\newcommand{\dist}{\operatorname{dist}}
\newcommand{\Div}{\operatorname{div}}
\newcommand{\Tr}{\operatorname{Tr}}
\newcommand{\Hess}{\operatorname{Hess}}
\newcommand{\Ric}{\operatorname{Ric}}
\newcommand{\Area}{\operatorname{Area}}
\newcommand{\interior}{\operatorname{int}}
\newcommand{\ind}{\operatorname{Index}}
\newcommand{\Lip}{\operatorname{Lip}}
\begin{document}

\newtheorem{theorem}{Theorem}[section]
\newtheorem{proposition}[theorem]{Proposition}
\newtheorem{corollary}[theorem]{Corollary}

\newtheorem{claim}{Claim}

\theoremstyle{remark}
\newtheorem{remark}[theorem]{Remark}

\theoremstyle{definition}
\newtheorem{definition}[theorem]{Definition}

\theoremstyle{plain}
\newtheorem{lemma}[theorem]{Lemma}
\newtheorem*{lemma1}{Lemma}

\numberwithin{equation}{section}

\title[Generic scarring for minimal hypersurfaces]{Generic scarring for minimal hypersurfaces along stable hypersurfaces}

\author[Antoine Song]{Antoine Song}
\address{Department of Mathematics, University of California, Berkeley, Berkeley, CA 94720, USA}
\email{aysong@berkeley.edu}

\author[Xin Zhou]{Xin Zhou}
\address{Department of Mathematics, Cornell University, Ithaca, NY 14853, USA, and Department of Mathematics, University of California Santa Barbara, Santa Barbara, CA 93106, USA}
\email{xinzhou@cornell.edu}


\begin{abstract}
Let $M^{n+1}$ be a closed manifold of dimension $3\leq n+1\leq 7$. We show that for a $C^\infty$-generic metric $g$ on $M$, to any connected, closed, embedded, $2$-sided, stable, minimal hypersurface $S\subset (M,g)$ corresponds a sequence of closed, embedded, minimal hypersurfaces $\{\Sigma_k\}$ scarring along $S$, in the sense that the area and Morse index of $\Sigma_k$ both diverge to infinity and, when properly renormalized, $\Sigma_k$ converges to $S$ as varifolds. We also show that scarring of immersed minimal surfaces along stable surfaces occurs in most closed Riemannian $3$-manifods. 
\end{abstract}

\maketitle

\pdfbookmark[0]{}{beg}

\setcounter{section}{-1}

\section{Introduction}
\label{S:intro}

The existence theory of closed minimal hypersurfaces has enjoyed important advancements in recent years. However there are only few general results describing the possible spatial distributions of minimal hypersurfaces in closed manifolds. The Lawson surfaces \cite{Lawson70} in the round $3$-sphere give examples of sequences of minimal surfaces that either get equidistributed, or ``scar'' in the sense that the minimal surfaces concentrate along a proper subset of the ambient manifold even though their areas diverge. In \cite{Marques-Neves-Song19}, Marques, Neves and the first-named author proved that generically in dimensions $3$ to $7$, there is a sequence of minimal hypersurfaces equidistributing on average. On the other hand, Colding-De Lellis \cite{Colding-DeLellis05} constructed examples of $3$-manifolds where a sequence of minimal surfaces accumulate around a stable minimal $2$-sphere. Wiygul \cite{Wiygul20} extended \cite{Kapouleas-Yang10} and found a sequence of minimal surfaces scarring along the Clifford torus in the round $\mathbb{S}^3$. In this paper, we show that generically in dimensions $3$ to $7$, scarring for minimal hypersurfaces happens as soon as there is a stable hypersurface. If $\Sigma$ is a closed hypersurface in $(M,g)$, we denote by $[\Sigma]$ the multiplicity one varifold associated to $\Sigma$ and by $||\Sigma||$ its area, or equivalently the mass of $[\Sigma]$. We need the varifold $\mathbf F$-distance defined in \cite[2.1(19)]{Pitts}. This distance $\mathbf{F}$ induces the usual varifold topology on the set of varifolds of mass at most $1$.

\begin{theorem} [Generic scarring]
\label{T:main}
Let $M^{n+1}$ be a closed manifold of dimension $3\leq n+1\leq 7$. For a $C^\infty$-generic metric $g$ on $M$ in the sense of Baire, the following holds. For any connected, closed, embedded, $2$-sided, minimal hypersurface $S\subset (M,g)$ which is stable, there is a sequence of closed, embedded, minimal hypersurfaces $\{\Sigma_k\}$ with the following properties:
\vspace{0.5em}
\begin{enumerate}[label=$(\roman*)$]
\addtolength{\itemsep}{0.7em}
\item 
\hfill
$\displaystyle \Sigma_k\cap S=\varnothing,$
\hfill\refstepcounter{equation}

\item 
\hfill
$\displaystyle \lim_{k\to\infty} ||\Sigma_k|| =\infty$,
\hfill\refstepcounter{equation}

\item 
\hfill
$\displaystyle \lim_{k\to \infty}\ind(\Sigma_k) ||\Sigma_k||^{-1} = ||S||^{-1}$,
\hfill\refstepcounter{equation}

\item 
\hfill
$\displaystyle \mathbf{F}\big(\frac{[S]}{||S||}, \frac{[\Sigma_k]}{||\Sigma_k||}\big) \leq \frac{1}{\log(||\Sigma_k||)}$. 
\hfill\refstepcounter{equation}
\end{enumerate}
\end{theorem}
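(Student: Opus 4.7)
The plan is to run a localized Almgren--Pitts/Marques--Neves $p$-sweepout min-max inside a thin tubular neighborhood of $S$, and to use Zhou's multiplicity one theorem together with the strict stability of $S$ (arranged generically) to force the resulting critical points to varifold-scar along $S$.

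\emph{Genericity and setup.} First I would reduce to the case where $S$ is strictly stable: by White's bumpy metric theorem, for a Baire-generic metric every closed embedded minimal hypersurface is non-degenerate, so in particular every stable $S$ as in the statement has a positive lower bound $\lambda_1(S)>0$ on the first eigenvalue of its Jacobi operator. Using 2-sidedness and strict stability, one obtains a small tubular neighborhood $U=S\times(-\epsilon,\epsilon)$ in which $S$ is isolated among stationary integral varifolds in the following quantitative sense: any stationary integral varifold supported in $U$ is an integer multiple $k[S]$, and any varifold close to being stationary in $U$ satisfies a Lojasiewicz-type estimate controlling $\mathbf{F}(V/\|V\|, [S]/\|S\|)$ in terms of $1/\log\|V\|$, coming from the spectral gap $\lambda_1(S)>0$ and the monotonicity of area along the stability flow.

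\emph{Min-max in $U$.} For each integer $p$, I would construct a continuous $p$-sweepout $\Phi_p$ of $U$ (relative to $\partial U$) with $\sup_x \mathbf{M}(\Phi_p(x))\le p\,\|S\|+o_\epsilon(p)$, built from the natural foliation of $U$ by graphs close to $S$ together with a $p$-fold wrapping of the parameter generating $H^1(\mathcal{Z}_n(U,\partial U);\mathbb{Z}_2)$. A localized min-max with pull-tight confined to $U$ then produces a closed, embedded, multiplicity-one minimal hypersurface $\Sigma_p\subset U$ with $\ind(\Sigma_p)\le p$ (Marques--Neves upper bound, enabled by Zhou's multiplicity one theorem). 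A non-triviality argument comparing the $p$-widths to the $(p-1)$-widths of $U$ yields $\|\Sigma_p\|\to\infty$, and since $\Sigma_p\subset U$ is stationary, the quantitative isolation above gives both conclusion (ii) and the logarithmic rate (iv).

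\emph{Index ratio and disjointness.} For (iii), I combine the Marques--Neves upper bound $\ind(\Sigma_p)\le p\approx \|\Sigma_p\|/\|S\|$ with a matching lower bound coming from the multi-sheet structure of $\Sigma_p$: as a consequence of (iv) and smooth $\epsilon$-regularity, $\Sigma_p$ is close in $U$ to a disjoint union of $k\approx\|\Sigma_p\|/\|S\|$ graphs over $S$, and between each adjacent pair of sheets one can build an almost-orthogonal destabilizing test function for the second variation (essentially a bump supported between the sheets, capturing the strictly negative Rayleigh quotient coming from $\lambda_1(S)>0$ propagated to the $k$-sheeted configuration), giving at least $k-o(k)$ independent negative directions. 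For (i), the strong maximum principle rules out intersection of $\Sigma_p$ with $S$: tangential contact would force $\Sigma_p\supset S$ by unique continuation, contradicting the connectedness and the growing area of $\Sigma_p$; transverse intersections can be excluded by a small generic perturbation of the min-max homotopy class, or because the multi-sheet structure keeps $\Sigma_p$ in an open sub-collar of $U$. The main obstacle is the sharp index lower bound matched to the logarithmic rate (iv); both hinge on translating the multi-sheet geometry of $\Sigma_p$ into quantitative spectral estimates, and this sheet-to-spectrum quantification is the heart of the scarring mechanism.
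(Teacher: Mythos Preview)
Your proposal has several genuine gaps, and the overall mechanism differs substantially from the paper's.

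\textbf{The index lower bound is backwards.} You want to produce $\ind(\Sigma_p)\gtrsim p$ by building ``destabilizing test functions between adjacent sheets, capturing the strictly negative Rayleigh quotient coming from $\lambda_1(S)>0$.'' But strict stability means $\lambda_1(S)>0$ is \emph{positive}: the second variation of $S$ is positive definite, so there is no negative Rayleigh quotient to propagate. A stack of $k$ graphical copies of a strictly stable $S$ is, to first approximation, \emph{stable}, not index-$k$. The paper obtains $\ind(\Sigma)=p$ by an entirely different route: it uses the multiplicity-one min-max theory (Zhou, Marques--Neves, extended by Y.~Li to the cylindrical setting) which, for suitably generic metrics, forces the $p$-width to be realized by a multiplicity-one hypersurface of index exactly $p$. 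No direct construction of negative directions on $\Sigma$ is attempted.

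\textbf{Localized min-max in $U=S\times(-\epsilon,\epsilon)$ does not produce closed minimal hypersurfaces in $M$.} Min-max in a compact manifold with boundary gives either free-boundary hypersurfaces or hypersurfaces that may touch $\partial U$; neither yields the closed embedded $\Sigma_k\subset M$ you need. The paper sidesteps this by cutting $M$ along $S$ to get $\hat M$ with strictly stable minimal boundary, attaching infinite cylindrical ends $S\times[0,\infty)$, and running min-max on compact exhaustions; strict stability and a barrier argument then force the resulting hypersurfaces into $\interior(\hat M)$, hence into $M\setminus S$.

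\textbf{The logarithmic rate is not a Lojasiewicz consequence.} Your ``Lojasiewicz-type estimate'' controlling $\mathbf{F}(V/\|V\|,[S]/\|S\|)$ by $1/\log\|V\|$ from $\lambda_1(S)>0$ is not a known result and would need a serious argument. In the paper, the rate comes from a completely different mechanism: a metric-perturbation argument in the spirit of Irie--Marques--Neves and Marques--Neves--Song. One deforms $g_t=g+t\varphi g$ with $\varphi$ a cutoff near $S$, differentiates the cylindrical widths $\tilde\omega_p(\hat M,g_t)$ (which satisfy the cylindrical Weyl law $\tilde\omega_p=p\,\|S\|+O(p^{1/(n+1)})$), and compares with $\frac{d}{dt}\Area_{g_t}(S)$. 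Choosing the scales $t_k=\exp(-\delta\sqrt{k})$ and $p_k\approx\exp(2\delta\sqrt{k})$ and invoking a quantitative Constancy Theorem (proved separately) yields the $1/\log\|\Sigma\|$ bound. The genericity is then obtained by an open-and-dense induction over all stable $S$ of area at most $A$; your proposal does not address how the Baire set is built.
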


\begin{remark}
This theorem confirms a conjecture by the first-named author; see \cite[Conjecture 4]{Song19}. From the proof of Theorem \ref{T:main}, it will be clear that it can be generalized to $1$-sided minimal hypersurfaces $S$ with stable $2$-sided double cover. A particular feature of Theorem \ref{T:main} is that we obtain an explicit quantitative estimate on how close the minimal hypersurfaces are from $S$. The logarithm is not sharp and can at least be replaced by any function $f(x)$ growing slower than positive powers of $x$, i.e. $\lim_{x\to \infty} f(x) x^{-\alpha}=0$ for all $\alpha>0$. 
\end{remark}

\subsection*{Motivations and historical backgrounds} 

Minimal hypersurfaces, as critical points of the area functional, can be considered as non-linear geometric analogues of the Laplacian eigenfunctions. Equidistribution and scarring have been widely studied in spectral theory and ergodic theory; we recall here some results that partly motivate the study of distributions of minimal hypersurfaces. First, pointwise Weyl laws imply that the $L^2$-densities of eigenfunctions equidistributes on average; see \cite{Avakumovic56}. In negative curvature, the quantum ergodicity theorem asserts that for a density one subsequence of eigenfunctions, the normalized densities individually equidistribute, a fact related to the ergodicity of the geodesic flow in negative curvature; see \cite{Snirelman74, Zelditch87,CdV85}. The Quantum Unique Ergodicity conjecture \cite{Rudnick-Sarnak94} predicts that the full sequence should equidistribute; see \cite{Zelditch17, Sarnak95, Hassell11} for surveys. In contrast, in certain generic situations some subsequences do not equidistribute; instead the $L^2$-densities ``scar'' along subregions of the ambient manifolds; we mention \cite{Hassell10} which treats certain ergodic billiards, and \cite{Babich-Lazutkin67,Ralston77,CdV09} which imply scarring of eigenfunctions along certain elliptic geodesics in generic closed surfaces. Analogous behaviors for eigenfunctions of certain quantized toral automorphisms were discovered in \cite{Faure-Nonnenmarcher-DeBievre03}.

As mentioned earlier, the existence of minimal hypersurfaces in closed manifolds is rather well understood by now. The Almgren-Pitts min-max theory \cite{Almgren62, Almgren65, Pitts, Schoen-Simon81} was much improved starting from the work of Marques-Neves \cite{Marques-Neves14, Marques-Neves17}. 
Yau's conjecture on the existence of infinitely many closed minimal surfaces was proved in the general case by the first-named author \cite{Song18}, building on Marques-Neves' approach \cite{Marques-Neves17}. The generic equidistribution theorem in \cite{Marques-Neves-Song19} quantifies the generic density result of Irie-Marques-Neves \cite{Irie-Marques-Neves18}. Both proofs uses the Weyl Law for the area functional by Liokumovich-Marques-Neves \cite{Liokumovich-Marques-Neves16}. Around the same time, a Morse theory was established for the area functional: the second-named author \cite{Zhou19} proved the Multiplicity One Conjecture raised by Marques-Neves \cite{Marques-Neves16, Marques-Neves18}, using theory developed in \cite{Zhou-Zhu18}; (see also Chodosh-Mantoulidis \cite{Chodosh-Mantoulidis20}). When combined with \cite{Marques-Neves18}, this implies that, for bumpy metrics, there is a closed embedded minimal hypersurface of Morse index $p$ for every $p\in\N$. Recently, the Morse inequalities for the area functional were established for bumpy metrics by Montezuma-Marques-Neves in \cite{Marques-Montezuma-Neves20}.

\subsection*{More on dimension 3}

In dimension $n+1=3$, more can be said about the scarring minimal surfaces $\Sigma_k$ produced by Theorem \ref{T:main}. They satisfy for instance a local sheet accumulation property, as we explain now. Given a positive integer $N_0$, the first-named author introduced in \cite{Song19} a two-piece decomposition for any minimal surface $\Sigma$ into a non-sheeted part $\Sigma_{\leq N_0}$ and sheeted part $\Sigma_{>N_0}$:
$$\Sigma=\Sigma_{\leq N_0} \sqcup \Sigma_{>N_0},$$
in analogy with the usual thick-thin decomposition for manifolds with bounded sectional curvature. Roughly speaking, at each point of $\Sigma_{\leq N_0}$ (resp. $\Sigma_{> N_0}$), the number of sheets at the scale of stability, or equivalently scale of curvature, is at most $N_0$ (resp. larger than $N_0$). It was proved that the genus and area of $\Sigma_{\leq N_0} $ are always controlled by the index of $\Sigma$, independently of the area of $\Sigma$, see \cite[Section 4]{Song19}. For the minimal surfaces $\Sigma_k$ in Theorem \ref{T:main}, it follows from \cite[Theorem 17]{Song19} that they are mostly locally sheeted in the sense that for any integer $N_0$,
$$\lim_{k\to\infty} \frac{\Area(\Sigma_{k,\leq N_0})}{\Area(\Sigma_k)}=0.$$

Moreover, in dimension $n+1=3$, topology often helps finding stable minimal surfaces. Combining this observation with the proof of Theorem \ref{T:main}, we show the following. 
\begin{theorem} [Generic scarring in dimension 3]
\label{T:3dimension}
Let $M^3$ be a closed $3$-manifold which is not diffeomorphic to a spherical quotient $\mathbb{S}^3/\Gamma$. Then for a $C^\infty$-generic metric $g$ on $M$, there is a sequence of connected, closed, immersed, minimal surfaces $\{\Sigma_k\}$ such that their Morse index and the area of their images both diverge to infinity, and 
$$ \lim_{k\to 0}\mathbf{F}\big(\frac{[S]}{||S||},\frac{[\Sigma_k]}{||\Sigma_k||}\big)=0$$
for some connected, closed, immersed, stable minimal surface $S\subset (M, g)$.
\end{theorem}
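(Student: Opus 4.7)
I would reduce Theorem~\ref{T:3dimension} to Theorem~\ref{T:main} by exhibiting, in a suitable finite cover $\pi:\tilde M\to M$, an embedded $2$-sided strictly stable minimal surface $\tilde S$, invoking Theorem~\ref{T:main} in the cover, and pushing the resulting scarring sequence down to $M$ via $\pi$.

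\textbf{Topological input and strict stability in the cover.} By the resolution of the Poincar\'e conjecture, $M$ not diffeomorphic to $\mathbb{S}^3/\Gamma$ is equivalent to $|\pi_1(M)|=\infty$. After passing to the orientation double cover if $M$ is non-orientable, I may assume $M$ is orientable. If $\pi_2(M)\neq 0$, then for any metric $g$ a Meeks--Simon--Yau minimizer furnishes an embedded $2$-sided stable minimal $2$-sphere $\tilde S_0\subset M$, and I take $\tilde M=M$. If $\pi_2(M)=0$, then $M$ is aspherical and by Perelman's geometrization together with Agol's virtually Haken theorem (supplied in the hyperbolic case by Kahn--Markovic's surface subgroup theorem and Agol's separability), there is a finite cover $\pi:\tilde M\to M$ containing a closed $\pi_1$-injective embedded $2$-sided surface $\tilde S_0$. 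For a $C^\infty$-generic $g$ on $M$, the lifted metric $\tilde g:=\pi^\ast g$ lands in a $C^\infty$-residual set of metrics on $\tilde M$; minimizing area in the isotopy class (in the $\pi_2$ case) or the homology class (in the aspherical case) of $\tilde S_0$, via the Meeks--Simon--Yau / Hardt--Simon regularity theorems, produces a closed embedded $2$-sided stable minimal surface $\tilde S\subset(\tilde M,\tilde g)$, and a standard perturbation argument upgrades stability to strict stability for generic $g$.

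\textbf{Scarring in the cover and descent to $M$.} Apply Theorem~\ref{T:main} (or the $1$-sided variant from the Remark following it, if $\tilde S$ happens to be $1$-sided with stable $2$-sided double cover) to $\tilde S\subset(\tilde M,\tilde g)$ to produce connected closed embedded minimal hypersurfaces $\{\tilde\Sigma_k\}\subset\tilde M$ with $\|\tilde\Sigma_k\|\to\infty$, $\ind(\tilde\Sigma_k)\to\infty$, and $\mathbf F\bigl([\tilde S]/\|\tilde S\|,\,[\tilde\Sigma_k]/\|\tilde\Sigma_k\|\bigr)\to 0$. Define $\Sigma_k:=\pi\circ\iota_{\tilde\Sigma_k}:\tilde\Sigma_k\to M$ and $S:=\pi\circ\iota_{\tilde S}:\tilde S\to M$; these are connected closed immersed minimal surfaces in $M$ with $S$ stable. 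Because $\pi$ is a local isometry, the areas and Morse indices (read off the immersions with multiplicity) coincide with those of $\tilde\Sigma_k$ and $\tilde S$, and $\pi$-pushforward of varifolds is continuous in $\mathbf F$, so $\mathbf F\bigl([S]/\|S\|,\,[\Sigma_k]/\|\Sigma_k\|\bigr)\to 0$ descends to $M$. The main obstacle is the topological input: virtually producing an embedded closed $2$-sided incompressible surface in every closed $3$-manifold with infinite $\pi_1$ leans on the full weight of $3$-manifold topology (geometrization, Agol's virtually Haken, Kahn--Markovic). Once this is secured, the genericity transfer from $M$ to $\tilde M$ and the varifold descent via the local isometry $\pi$ are essentially routine.
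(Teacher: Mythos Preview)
The proposal has a genuine gap at the step you call ``genericity transfer.'' You assert that for a $C^\infty$-generic $g$ on $M$, the lift $\tilde g=\pi^\ast g$ lands in the $C^\infty$-residual set of metrics on $\tilde M$ to which Theorem~\ref{T:main} applies. This is not justified and is in fact the central difficulty. The lifted (deck-group-invariant) metrics form a closed, nowhere-dense subspace of $\mathbf{\Gamma}^{(\infty)}(\tilde M)$, and a residual subset of the full space of metrics on $\tilde M$ need not intersect this subspace at all, let alone in a residual subset. The proof of Theorem~\ref{T:main} hinges on immersed bumpiness (via the multiplicity-one and Morse index statements in Theorem~\ref{T:min-max theory for cylindrical width} and Lemma~\ref{L:derivative of cylindrical widths}), and there is no reason the required bumpiness or transversality for \emph{all} closed minimal hypersurfaces in $\tilde M$---including the non-equivariant ones---can be achieved by an equivariant perturbation.

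The paper confronts exactly this obstacle. Rather than invoking Theorem~\ref{T:main} as a black box, it reruns the deformation argument of Theorem~\ref{T:deform} on the cover with the perturbation tensor $h=\varphi g$ chosen on $M$ and then lifted, so that the family $g_t=g+th$ remains equivariant. The transversal perturbation $\tilde g_t$ of $g_t$ (needed for Lemma~\ref{L:derivative of cylindrical widths}) is generally \emph{not} equivariant; the paper extracts the min-max surfaces $\Sigma$ in $(\tilde M,\tilde g_s)$ and then passes to a limit as $\tilde g_t\to g_t$ using Sharp's compactness to obtain a surface $Z$ in the honest equivariant metric $g_s$, which can then be projected. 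This compactness step is why the quantitative index--area relation of Theorem~\ref{T:main} is lost, and why an additional argument (Chodosh--Ketover--Maximo) is needed to guarantee $\ind(Z)\to\infty$. Your outline elides all of this and labels it ``essentially routine,'' but it is the substance of the proof.
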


In the case $M$ carries a hyperbolic metric, one can say more: given any $\pi_1$-injective surface $F$, there is a sequence of minimal surfaces scarring along an area minimizing surface homotopic to $F$; see Corollary \ref{C:hyperbolic}. It is known that $\pi_1$-injective surfaces exist in abundance by Kahn-Markovi\'{c} \cite{Kahn-Markovic12a, Kahn-Markovic12b}. We point out that the asymptotic growth of the size of certain families of area minimizing surfaces in negatively curved closed $3$-manifolds was recently studied by Calegari-Marques-Neves \cite{Calegari-Marques-Neves20}.

\begin{remark}
Compared to Theorem \ref{T:main}, we can ensure in Theorem \ref{T:3dimension} that $\Sigma_k$ is connected, on the other we cannot give quantitative estimates relating the index and area of $\Sigma_k$. Each surface  $\Sigma_k$ may be a multiple cover of its image, but the degree of the covering is uniformly bounded as $k\to \infty$.
\end{remark}

\subsection*{Overview of Proofs}

The proof of Theorem \ref{T:main} is based on a perturbation argument applied to an asymptotic formula for certain min-max widths. In \cite{Irie-Marques-Neves18}, such a strategy was applied to the Weyl law for the volume spectrum \cite{Liokumovich-Marques-Neves16} in order to get generic density of minimal hypersurfaces; (see \cite{Song19a} for an alternative argument bypassing the Weyl law). This result was quantified in \cite{Marques-Neves-Song19}. It is such a quantified argument that we wish to use here, but with another Weyl law type formula introduced by the first-named author in the proof of Yau's conjecture \cite{Song18}. This formula, that we will call cylindrical Weyl law (see Section \ref{S:core manifold}), describes the growth of a sequence of min-max numbers $\{\tilde{\omega}_p\}$ canonically associated with a given strictly stable minimal hypersurface $S$:
\[ \lim_{p\to \infty}\frac{\tilde{\omega}_p}{p} = \Area(S). \]
These min-max numbers $\tilde{\omega}_p$ are realized as the areas of some minimal hypersurfaces. When we perturb the metric, these areas are asymptotically only sensitive to the area of $S$ and not to the changes far from $S$, by the cylindrical Weyl law. So intuitively it should imply that ``on average'' in a neighborhood of the given metric, those minimal hypersurfaces are accumulating around $S$. Similarly to \cite{Marques-Neves-Song19}, the proof relies on a derivative estimate for the min-max numbers $\tilde{\omega}_p$.

An important difference with \cite{Marques-Neves-Song19} is that we want to obtain minimal hypersurfaces with both area and index diverging to infinity. To achieve this, it is not possible to directly apply the Multiplicity One Conjecture proved by the second-named author \cite{Zhou19} and the Morse Index Conjecture proved in \cite{Marques-Neves18}. This issue will require to study a certain codimension $1$ Banach submanifold in the space of minimal embeddings defined in \cite{White91}.

An additional feature of this cylindrical Weyl law compared to the usual Weyl law for the volume spectrum is the possibility to estimate the remainder term. This enables us, after proving a quantitative Constancy Theorem, and by carefully keeping track of all the perturbations, to prove that the varifold distances between $S$ and certain normalized minimal hypersurfaces have a quantified estimate in terms of the areas. The final scarring result follows from an induction argument on all strictly stable minimal hypersurfaces.

\vspace{1em}
The proof of Theorem \ref{T:3dimension} starts with the following observation: for most closed $3$-manifolds $M$, after taking a finite cover $M_{cover}$, one can find a non-trivial homotopy class of embedded surfaces. By a standard minimization procedure, one gets an embedded stable minimal surface in $M_{cover}$. Then one would like to apply Theorem \ref{T:main} to $M_{cover}$ and project back to $M$. The problem is that generic metrics on $M_{cover}$  are in general not lifts of metrics on $M$, so in the end we prove a scarring result without quantitative estimates on the Morse index and area.

\subsection*{Organization}

The paper is organized as follows. In Section \ref{S:Preliminaries}, we collect preliminary results on min-max theory for minimal hypersurfaces in compact manifolds with boundary, including some properties of the cylindrical volume spectrum. In Section \ref{S:constancy}, we prove a quantitative version of the Constancy Theorem. In Section \ref{S:metric deformation}, we use the perturbation argument to prove a key intermediate deformation result. Finally we finish the proof of our generic scarring theorems in Section \ref{S:scarring}, and apply those arguments to $3$-manifolds which are not spherical quotients. In Appendix \ref{A:Differentiability of the first eigenvalue}, we recall a well-known fact on the differentiability of the first eigenvalue of self-adjoint second order elliptic operators.

\vspace{1em}
{\bf Acknowledgement:} This research was partially conducted during the period A. S. served as a Clay Research Fellow. X. Z. is partially supported by NSF grant DMS-1811293, DMS-1945178, 
and an Alfred P. Sloan Research Fellowship. We would like to thank Peter Sarnak for discussions and for pointing out \cite{Babich-Lazutkin67,Ralston77}.

\section{Preliminaries}
\label{S:Preliminaries}

\subsection{Min-max theory in compact manifolds with boundary}
\label{S:core manifold}

In this part, we describe results revolving around a procedure to construct closed minimal hypersurfaces in a compact manifold $\hat{M}$ with strictly stable minimal boundary. This procedure was introduced by the first-named author in \cite{Song18} in relation to Yau's conjecture. The idea is to attach a cylindrical end to each connected boundary component of $\hat{M}$ to obtain a non-compact manifold with a Lipschitz metric, and then study the sequence of its min-max widths (or volume spectrum) \cite{Marques-Neves17}. It was proved that these widths grows linearly, with leading coefficient the largest area among areas of the boundary connected components. Moreover these widths are realized by the areas (with multiplicities) of closed minimal hypersurfaces embedded in the interior of $\hat{M}$, and it was shown by observing that the free boundary min-max theory of M. M.-C. Li and the second-named author \cite{Li-Zhou16} applied to compact approximations of the non-compact manifold produces compact embedded minimal hypersurfaces that eventually pull back to the interior of the original manifold $\hat{M}$. These results were recently adapted by Y. Li \cite{Y.Li20} who showed, based on the resolution of the Multiplicity One Conjecture by the second-named author \cite{Zhou19} and the Morse Index Conjecture by Marques-Neves \cite{Marques-Neves18}, that in a bumpy metric, the constructed minimal hypersurfaces have multiplicity one and linear growing Morse index.

Let $(\hat{M}^{n+1}, \partial\hat{M}, g)$ denote a compact connected $(n+1)$-dimensional smooth manifold with boundary endowed with a $C^q
$ metric with $q\geq 3$. Assume that $\partial \hat M =\cup_{i=1}^l S_i$ is a disjoint union of connected, strictly stable, minimal hypersurfaces $\{S_i\}_{i=1}^l$.  The metric $g$ is said to be {\em embedded bumpy} (resp. {\em immersed bumpy}) if every closed embedded (resp. immersed) minimal hypersurface inside $\hat M$ is non-degenerate, i.e. has no non-zero Jacobi fields.  It is useful to introduce the following manifold with cylindrical ends \cite{Song18}:
\begin{equation}
\mathcal C(\hat M) = \hat M \cup_{id} (\partial\hat M \times [0, \infty)),
\end{equation}
where $id: \partial \hat M \times \{0\}\to \partial \hat M$ is the canonical identity map. Note that $\mathcal C(\hat M)$ is a non-compact smooth manifold. $\mathcal C(\hat M)$ is endowed with a natural Lipschitz metric $h$ by simply putting together $g$ with the product metric $g|_{\partial\hat M}+(dt)^2$.

Consider an exhaustion of $\mathcal C(\hat M)$ by compact subsets $K_1\subset K_2\subset\cdots \subset \mathcal C(\hat M)$. The following definition does not depend on the particular choice of exhaustion $K_1\subset K_2\subset\cdots\subset \mathcal C(\hat M)$.
\begin{definition}[\cite{Song18}]
\label{D:cylindrical width}
For each positive integer $p$, the {\em cylindrical $p$-width} of $(\hat M, g)$ is defined as
\begin{equation}
\tilde\omega_p(\hat M, g) :=  \omega_p(\mathcal C(\hat M), h) := \lim_{i\to\infty} \omega_p(K_i, h).
\end{equation}
Here $\omega_p(K_i, h)$ is the $p$-width of $(K_i, h)$ defined in \cite[Definition 4.3]{Marques-Neves17}.
The sequence $\{\tilde\omega_p(\hat M, g)\}_{p}$ will be called {\em cylindrical volume spectrum}.
\end{definition}

These widths satisfy a ``cylindrical'' Weyl Law; see \cite[Theorem 8]{Song18}; (compare with the classical Weyl Law \cite{Liokumovich-Marques-Neves16}).
\begin{theorem}
\label{T:cylindrical Weyl Law}

Suppose that $S_1$ has maximal area among the boundary components $\{S_i\}_{i=1}^l$. Then there is a constant $C=C(g)$, such that 
\begin{equation}
p\cdot \Area(S_1) \leq \tilde\omega_p(\hat M, g) \leq p\cdot \Area(S_1) + C(g) p^{\frac{1}{n+1}}.
\end{equation}
Moreover, the constant $C(g)$ can be chosen to be locally bounded in the set of $C^q$ metrics endowed with the $C^1$ topology. 
\end{theorem}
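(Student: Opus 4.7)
The plan is to prove this in two halves: an upper bound built from explicit sweepouts concentrated on the maximal-area cylindrical end $S_1 \times [0,\infty)$, and a matching lower bound obtained from slicing those ends and invoking strict stability. Throughout, one works on a fixed compact exhaustion $K_i$ of $\mathcal{C}(\hat{M})$ and passes to the limit at the end.

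\textbf{Upper bound.} First I would reduce to a model computation on a long cylindrical piece $S_1 \times [0,T]$ with $T \gg p$. The natural $p$-sweepout is
\[ (t_1,\dots,t_p) \in [0,T]^p/\sim \; \longmapsto \; \sum_{i=1}^p [S_1 \times \{t_i\}] \pmod{2}, \]
whose mass is at most $p \cdot \Area(S_1)$ on every slice since the product metric makes each cross-section a cycle of mass exactly $\Area(S_1)$. A short Almgren-type computation confirms that this detects $\bar{\lambda}^p \in H^p(\mathbb{RP}^\infty; \mathbb{Z}/2)$. To extend this sweepout to all of $K_i$, one needs to sweep out the fixed compact core $\hat{M} \cup (\partial \hat{M} \times [0,T])$ once; by the classical Weyl Law of Liokumovich-Marques-Neves \cite{Liokumovich-Marques-Neves16} applied to this fixed compact manifold with boundary, this costs at most $C(g) p^{1/(n+1)}$, and the local $C^1$ dependence of $C(g)$ comes from the continuity of the Weyl-law constant under metric perturbation. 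Taking $i \to \infty$ yields the upper bound on $\tilde{\omega}_p$.

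\textbf{Lower bound.} For the reverse inequality I would take $K_i$ large enough to contain $S_1 \times [0, p+1]$ and consider an arbitrary $p$-sweepout $\Phi$ of $K_i$. Partition $S_1 \times [0, p+1]$ into $p$ disjoint slabs $U_k = S_1 \times [k-1,k]$ and apply a Lusternik–Schnirelmann decomposition to show that $\Phi$, restricted appropriately, yields a non-trivial $1$-sweepout on each $U_k$. The heart of the matter is a lower bound $\omega_1(U_k,h) \geq \Area(S_1) - o(1)$ as $p \to \infty$: this follows from strict stability of $S_1$ together with a constancy-type argument (compare Section \ref{S:constancy}) that forces the slice of any $1$-sweepout of $U_k$, at some parameter, to contain an (approximately) full copy of $S_1 \times \{t_k\}$. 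Summing over $k$ and using the disjointness of the slabs gives
\[ \max_{\text{param}} \mathbf{M}(\Phi) \geq \sum_{k=1}^p \bigl( \Area(S_1) - o(1) \bigr), \]
and letting $i$, then the exhaustion, tend to infinity recovers $\tilde{\omega}_p \geq p \cdot \Area(S_1)$.

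\textbf{Expected main obstacle.} The delicate step is the lower bound for $\omega_1$ on each slab. One must rule out the possibility that a $1$-sweepout of $U_k$ uses "partial" cycles that avoid ever carrying full mass $\Area(S_1)$, for instance by drifting the cycle toward one cylindrical face. Strict stability of $S_1$ is used here in two ways: it rules out deformations that send mass off to infinity with strictly smaller area, and it underlies the quantitative constancy statement that converts the topological detection of $\Phi$ into a sharp mass bound. Carefully tracking how all the implicit constants depend on the metric $g$ in $C^1$ (through the stability spectrum of $S_1$ and the compact-core Weyl constant) then yields the claimed local boundedness of $C(g)$.
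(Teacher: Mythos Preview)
The paper does not give a self-contained proof of this statement: it is quoted from \cite[Theorem~8]{Song18}, and the only argument supplied here is the one-sentence remark after the statement explaining that $C(g)$ comes from the Gromov--Guth upper bound \cite[Theorem~5.1]{Marques-Neves17} applied to the fixed compact piece $(\hat M,g)$, hence depends only on a Lipschitz triangulation of $\hat M$ and is $C^1$-locally bounded. So your proposal should be compared with the proof in \cite{Song18}, not with anything in the present paper.

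Your two-part strategy (explicit sweepouts for the upper bound, Lusternik--Schnirelmann on cylindrical slabs for the lower bound) is indeed the skeleton of the argument in \cite{Song18}, but the lower bound contains a genuine conceptual error. You justify $\omega_1(U_k)\ge \Area(S_1)-o(1)$ by invoking \emph{strict stability} of $S_1$ and the quantitative \emph{constancy theorem} of Section~\ref{S:constancy}. Neither tool is applicable. On the product cylinder $S_1\times[k-1,k]$ the slices $S_1\times\{t\}$ are minimal but \emph{not} strictly stable (the translation field $\partial_t$ is a Jacobi field), so the strict stability of $S_1$ inside $(\hat M,g)$ is irrelevant here; and the constancy theorem concerns \emph{stationary} varifolds, whereas the cycles in a sweepout are arbitrary flat chains with no variational property. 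The correct mechanism is calibration: the closed $n$-form $\pi^*d\mathrm{vol}_{S_1}$ on $S_1\times[0,T]$ calibrates the slices, so every relative cycle in the nontrivial class of $H_n(U_k,\partial U_k;\mathbb Z_2)$ has mass at least $\Area(S_1)$ \emph{exactly}, with no $o(1)$. The Lusternik--Schnirelmann step then produces a single parameter at which the sweepout cycle is homologically nontrivial in all $p$ slabs simultaneously, giving $\tilde\omega_p\ge p\cdot\Area(S_1)$ on the nose. Strict stability of $\partial\hat M$ plays no role in Theorem~\ref{T:cylindrical Weyl Law}; it enters only in Theorem~\ref{T:min-max theory for cylindrical width}, to confine the min-max hypersurfaces to the interior of $\hat M$.

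Your upper bound is in the right spirit but too vague to be a proof. Saying ``sweep out the fixed compact core once'' and then appending $p$ slices on the $S_1$-end does not describe a $p$-sweepout of $K_i$, and you have not addressed the other cylindrical ends $S_2,\dots,S_l$ at all. In \cite{Song18} one takes an honest $p$-sweepout of the fixed compact core $\hat M$ (not of $\hat M\cup(\partial\hat M\times[0,T])$) with mass $\le C(\hat M)\,p^{1/(n+1)}$ from Gromov--Guth, and extends it across \emph{every} cylindrical end; the point of the hypothesis that $S_1$ has maximal area is precisely that the extension over each $S_j$-end costs at most $\Area(S_1)$ per slice. Your invocation of the Liokumovich--Marques--Neves Weyl law is also slightly off: only the one-sided Gromov--Guth upper bound is needed, which is exactly what the paper's remark after the theorem says.
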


The fact that $C(g)$ can be chosen to be bounded in a $C^1$-neighborhood of $g$ follows from the proof of \cite[Theorem 8]{Song18}, where it is apparent that this $C(g)$ is (up to a universal factor) given by the constant in \cite[Theorem 5.1]{Marques-Neves17} applied to the compact manifold $(\hat{M}, g)$; (see \cite{Gromov88,Guth09} for the original versions of this result). That constant in turn comes from bounds on a cubical complex $K$ and a Lipschitz homeomorphism $G:K\to \hat{M}$ and so depends only on a $C^1$-neighborhood of $g$.

The following min-max theorem is an extension of \cite[Theorem 9]{Song18} by Y. Li \cite[Theorem 5]{Y.Li20} using the Multiplicity One Conjecture \cite{Zhou19} and the Morse Index Conjecture \cite{Marques-Neves16, Marques-Neves18}. 

\begin{theorem}
\label{T:min-max theory for cylindrical width}
Assume that $3\leq n+1\leq 7$. Then for each $p\in\N$, there exists a $C^q$ closed, embedded, minimal hypersurface $\Gamma_p$ contained in $\interior(\hat M)$, whose connected components are called $\Gamma_p^{(1)},...,\Gamma_p^{(k_p)}$, and associated positive integer multiplicities $m_1,...,m_{k_p}$ such that
\begin{equation}
\begin{aligned}
&\tilde\omega_p(\hat M, g) = \sum_{i=1}^{k_p} m_i \Area(\Gamma^{(i)}_p), \quad \text{ and }\\
&\ind(\Gamma_p) \leq p.
\end{aligned}
\end{equation}
Moreover if $g$ is immersed bumpy then $\Gamma_p$ can be chosen so that each $\Gamma_p^{(i)}$ is $2$-sided and $m_i = 1$ for all $i\in \{1,...,k_p\}$, and $\ind(\Gamma_p) = p$. 
\end{theorem}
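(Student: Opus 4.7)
The plan is to adapt the strategy from \cite{Song18}, supplemented by the refinements in \cite{Y.Li20}: run Almgren--Pitts min-max on a compact exhaustion of the cylindrical extension $\mathcal C(\hat M)$, extract a limiting minimal hypersurface contained in $\interior(\hat M)$, and then invoke the Multiplicity One Conjecture \cite{Zhou19} together with the Morse Index Conjecture \cite{Marques-Neves18} to upgrade the conclusion in the bumpy case.

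First, fix the compact exhaustion $K_i := \hat M \cup (\partial\hat M \times [0,i])$. Although $h$ is only Lipschitz across $\partial\hat M$, one may work with it directly as in \cite{Song18}, or smooth it on a thin collar without affecting widths in the limit. The free boundary min-max theory of M.\ M.-C.\ Li and the second-named author \cite{Li-Zhou16}, applied to each $(K_i, h)$, produces for every $p$ an embedded minimal hypersurface $\Gamma_{p,i}$ with positive integer multiplicities realizing $\omega_p(K_i, h)$ and satisfying $\ind(\Gamma_{p,i}) \leq p$. By Definition \ref{D:cylindrical width}, $\omega_p(K_i, h) \to \tilde\omega_p(\hat M, g)$ as $i \to \infty$.

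The main technical obstacle, exactly as in \cite{Song18}, is to keep the $\Gamma_{p,i}$ from escaping into the cylindrical ends. Strict stability of each boundary component $S_j$ is crucial here: it furnishes, in the product region $S_j \times [0,\infty)$, a local one-parameter family of hypersurfaces whose areas are strictly smaller than $\Area(S_j)$, serving as a maximum-principle barrier. Consequently, any component of a varifold limit $\Gamma_p$ supported deep in a cylindrical end could be shifted toward $\partial \hat M$ by a definite amount, producing a sweepout of smaller width and contradicting the sharp leading-term bound of Theorem \ref{T:cylindrical Weyl Law}. After a cylindrical translation, the $\Gamma_{p,i}$ subconverge in the varifold sense to a closed, embedded minimal hypersurface $\Gamma_p = \sum_i m_i [\Gamma_p^{(i)}] \subset \interior(\hat M)$ of weighted area $\tilde\omega_p(\hat M,g)$ with $\ind(\Gamma_p) \leq p$.

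To obtain the refined statement when $g$ is immersed bumpy, I would invoke the Multiplicity One Theorem \cite{Zhou19}, extended to the free boundary setting in \cite{Y.Li20}, on each $(K_i, h)$: it ensures that every component of $\Gamma_{p,i}$ is $2$-sided with multiplicity one, and bumpyness propagates these properties to the limit $\Gamma_p$, since any convergence with multiplicity $\geq 2$ would produce a non-trivial Jacobi field on a component of $\Gamma_p$. Finally, the Morse Index lower bound of Marques-Neves \cite{Marques-Neves18} forces $\ind(\Gamma_p) = p$, once one notes that $\tilde\omega_p$ is realized by an index-$p$ hypersurface in the bumpy setting. The hardest part throughout remains the confinement of $\Gamma_{p,i}$ to $\interior(\hat M)$ uniformly in $i$; everything else is a careful packaging of \cite{Li-Zhou16, Zhou19, Marques-Neves18} in the non-compact cylindrical setting.
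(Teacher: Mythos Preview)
The paper does not prove this theorem; it simply records it as the combination of \cite[Theorem~9]{Song18} and \cite[Theorem~5]{Y.Li20} (with the remark that the arguments work for $C^q$ metrics). Your outline tracks that architecture and cites the right ingredients, but two of the steps are mis-described in ways that would make the argument fail as written.

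\textbf{Confinement.} In the product region $S_j\times[0,\infty)$ with the product metric, every slice has area exactly $\Area(S_j)$, and any competitor homologous to a slice has area at least $\Area(S_j)$. There is no ``one-parameter family of hypersurfaces whose areas are strictly smaller than $\Area(S_j)$'', and translating a cylindrical component does not lower the width, so your contradiction with Theorem~\ref{T:cylindrical Weyl Law} does not go through (the lower bound $p\cdot\Area(S_j)$ is exactly saturated by $p$ parallel slices). In \cite{Song18} strict stability is used differently: after smoothing $h$ to nearby smooth metrics $h_\epsilon$, one arranges a mean-convex foliation in a collar of $\partial\hat M$ (mean curvature pointing toward $\hat M$), so that by the maximum principle any closed minimal component intersecting the cylindrical part must be a slice; those slice components are then peeled off and the remaining components are confined to $\interior(\hat M)$.

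\textbf{Multiplicity one and index.} You propose to apply \cite{Zhou19} (in the free boundary form of \cite{Y.Li20}) directly on $(K_i,h)$. But $h$ is only Lipschitz, and even after smoothing, the metric on $K_i$ is \emph{never} bumpy: the slices $S_j\times\{t\}$ form a one-parameter family of minimal hypersurfaces with the obvious translational Jacobi field. In \cite{Y.Li20} one first perturbs to a genuinely bumpy metric on each $K_i$, obtains multiplicity one and index equal to $p$ there, and only then passes to the limit. Immersed bumpiness of $g$ is used at the \emph{limit} stage in $\interior(\hat M)$, to rule out multiplicity jumping and index drop, not on $(K_i,h)$.
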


The results cited above were originally stated for smooth metrics, but the proofs directly apply to $C^q$ metrics, for $q\geq 3$.

\subsection{Cylindrical widths are locally Lipschitz functions of the metric}

Let $(M^{n+1}, g)$ be a closed connected $(n+1)$-dimensional Riemannian manifold with $3\leq (n+1)\leq 7$. For an integer $q \geq 3$ or $q=\infty$, we denote by $\mathbf{\Gamma}^{(q)}$ the set of $C^q$ metrics on $M$, endowed with the $C^q$ topology. In this paper the specific choice of $q$ will not matter, as long as it is chosen large enough. As in the last section, the metric $g$ is said to be {\em embedded bumpy} (resp. {\em immersed bumpy}) if every closed embedded (resp. immersed) minimal hypersurface in $M$ is non-degenerate. By \cite{White91,White17}, the set of embedded or immersed bumpy metrics is a $C^q$-generic subset in $\mathbf{\Gamma}^{(q)}$ for any $q \geq 3$ or $q=\infty$, in the sense of Baire category.

Fix $q \geq 3$ in this Subsection, and let $g$ be a $C^q$ metric. Let $S\subset (M, g)$ be a {connected,} closed, embedded, minimal hypersurface, which is 2-sided strictly stable. By the Implicit Function Theorem (see also \cite[Theorem 2.1]{White91}), 
given any other metric $\hat{g}$ in a small $C^q$-neighborhood of $g$, there is a unique minimal hypersurface $S_{\hat{g}}$ that can be written as a section of the normal bundle of $S$, with small $C^{j, \al}$-norm ($j\leq q-1$). In particular, $S_{\hat{g}}$ is also 2-sided and strictly stable. 

For $\hat{g}$ close to $g$ in the $C^q$ topology, we can define $\hat{M}_{\hat{g}}$ as follows. We pick a connected component of $M\backslash S_{\hat{g}}$, take the metric completion and obtain a compact Riemannian manifold with boundary $(\hat{M}_{\hat{g}}, \hat{g})$. Note that if $S_{\hat{g}}$ separates $M$, we can choose the component $\hat{M}_{\hat{g}}$ so that it depends continuously on $\hat{g}$, and in this case $\partial\hat M_{\hat{g}}=S_{\hat{g}}$. Otherwise, $S_{\hat{g}}$ does not separate $M$ and $\partial\hat{M}_{\hat{g}}$ consists of two isometric copies of $S_{\hat{g}}$. The manifold $(\hat{M}_{{g}}, {g})$ obtained for $g$ will be denoted by $\hat{M}_0$.

The following lemma says that the normalized cylindrical widths $\frac{1}{p}\tilde{\omega}_p(\hat M_{\hat{g}}, \hat g)$ are Lipschitz functions of the  metric $\hat g$ within a small $C^q$-neighborhood of $g$. Note that a similar property for the normalized classical widths was proved in \cite[Lemma 1]{Marques-Neves-Song19}; however, in our current situation,  we have to take care of the fact that the underlying manifold is changing when the metric $\hat g$ changes. The norms $||.||_{C^k}$ are measured with respect to the metric $g$.
\begin{lemma}
\label{L:cylindrical withs are Lipschitz}
Fix a positive integer $ q \geq 3$ and let $g$ be a $C^q$ metric on $M$. Assume that $S$ is a {connected}, closed, embedded, minimal hypersurface which is 2-sided and strictly stable. Then there exist $\ep>0$ and $C>0$ (depending only on $g$, $S$, and $q$), such that for any {$C^q$} metrics $\hat{g}_1$ and $\hat{g}_2$ with $\|\hat{g}_i-g\|_{C^q}<\ep$ for $i=1, 2$, we have
\[ \Big| \frac{1}{p}\tilde{\omega}_p(\hat M_{\hat{g}_1}, \hat g_1) - \frac{1}{p}\tilde{\omega}_p(\hat M_{\hat{g}_2}, \hat g_2) \Big|\leq C \|\hat g_1-\hat g_2\|_{C^q}. \]
\end{lemma}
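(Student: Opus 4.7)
The plan is to reduce the comparison to the case where both cylindrical widths are computed on a single fixed manifold $\mathcal{C}(\hat{M}_0)$, by pulling back each perturbed metric through a diffeomorphism of $M$ that identifies $S_{\hat{g}}$ with $S$. Once everything lives on a common background, the estimate will follow from the classical bi-Lipschitz mass comparison combined with the linear upper bound $\tilde\omega_p \leq C(g)\,p$ from Theorem \ref{T:cylindrical Weyl Law}.

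First I would build a family of $C^{q-1}$ diffeomorphisms $\phi_{\hat{g}} : M \to M$, defined for $\hat{g}$ in a small $C^q$-neighborhood of $g$, satisfying $\phi_g = \mathrm{id}$, $\phi_{\hat{g}}(S) = S_{\hat{g}}$, $\phi_{\hat{g}}$ supported in a fixed tubular neighborhood of $S$, and with Lipschitz dependence $\|\phi_{\hat g_1} - \phi_{\hat g_2}\|_{C^{q-1}} \leq C\,\|\hat g_1 - \hat g_2\|_{C^q}$. Since $S_{\hat{g}}$ is the normal graph of a section $\sigma_{\hat{g}}$ of the normal bundle of $S$ depending Lipschitz on $\hat{g}$ (the content of White's Implicit Function Theorem quoted above), one defines $\phi_{\hat{g}}$ by pushing $S$ along its normal exponential map by the amount $\sigma_{\hat{g}}$ with a fixed smooth cut-off. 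In the separating case $\phi_{\hat{g}}$ restricts to a diffeomorphism $\hat{M}_0 \to \hat{M}_{\hat{g}}$; in the non-separating case the same construction lifts to an identification of the cut manifolds.

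Setting $\tilde{g}_i := \phi_{\hat{g}_i}^* \hat{g}_i$, the map $\phi_{\hat{g}_i} : (\hat{M}_0, \tilde{g}_i) \to (\hat{M}_{\hat{g}_i}, \hat{g}_i)$ is an isometry of manifolds with boundary, extending canonically to an isometry $\mathcal{C}(\hat{M}_0) \to \mathcal{C}(\hat{M}_{\hat{g}_i})$ (the identity on the $[0,\infty)$ factor, and $\phi_{\hat{g}_i}|_{\partial \hat{M}_0}$ on the boundary factor), so $\tilde\omega_p(\hat{M}_{\hat{g}_i}, \hat{g}_i) = \tilde\omega_p(\hat{M}_0, \tilde{g}_i)$. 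The chain rule together with the Lipschitz bound on $\phi_{\hat{g}}$ yields $\|\tilde{g}_1 - \tilde{g}_2\|_{C^0(M)} \leq C\,\|\hat{g}_1 - \hat{g}_2\|_{C^q}$, and the same bound transfers to $\mathcal{C}(\hat{M}_0)$ because the cylindrical metric there is $\tilde{g}_i|_S + dt^2$. Writing $\delta := \|\hat{g}_1 - \hat{g}_2\|_{C^q}$ and letting $h_i$ denote the induced cylindrical metrics, this yields the pointwise comparison $(1 - C\delta)\,h_1 \leq h_2 \leq (1 + C\delta)\,h_1$ on $\mathcal{C}(\hat{M}_0)$.

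Finally, for any exhaustion $\{K_j\}$ of $\mathcal{C}(\hat{M}_0)$ and any $p$-sweepout $\Phi$ on $K_j$, each slice mass transforms by at most $\mathbf{M}_{h_2}(\Phi(x)) \leq (1 + C\delta)^{n/2}\,\mathbf{M}_{h_1}(\Phi(x))$, hence
\[ |\omega_p(K_j, h_1) - \omega_p(K_j, h_2)| \leq C'\,\delta\,\omega_p(K_j, h_1). \]
Letting $j \to \infty$, dividing by $p$, and invoking the uniform upper bound $\tilde\omega_p(\hat{M}_0, \tilde{g}_i)/p \leq C''$ from Theorem \ref{T:cylindrical Weyl Law} (whose constant is locally bounded in the $C^1$ topology, hence uniformly controlled on the small $C^q$-neighborhood of $g$) completes the estimate. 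The main obstacle is constructing $\phi_{\hat{g}}$ with the stated Lipschitz dependence on $\hat{g}$ in the correct topologies; the non-compactness of $\mathcal{C}(\hat{M}_0)$ is a minor issue because the bi-Lipschitz mass comparison is stable under the exhaustion limit.
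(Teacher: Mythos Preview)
Your proposal is correct and follows essentially the same approach as the paper's proof: construct diffeomorphisms of $M$ carrying $S$ to $S_{\hat g_i}$ via the normal exponential map and a cut-off (the paper writes this explicitly as $(x,t)\mapsto (x,t+\phi(t)\hat u(x))$ in Fermi coordinates), pull back to a fixed $(\hat M_0,\Phi_i^*\hat g_i)$, estimate $\|\Phi_1^*\hat g_1-\Phi_2^*\hat g_2\|_{C^0}$ by $C\|\hat g_1-\hat g_2\|_{C^q}$, and then use the bi-Lipschitz mass comparison on each $K_j$ together with the linear upper bound in Theorem~\ref{T:cylindrical Weyl Law}. The only difference is that the paper spells out the diffeomorphism and the chain-rule estimate in detail, whereas you outline them; your identification of this step as the main technical point is accurate.
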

\begin{proof}
For any {$C^q$} metric $\hat g$ with $\|\hat g- g\|_{C^q}\ll 1$, by the Implicit Function Theorem, $S_{\hat g}$ is a normal graph over $S$. Choose a unit normal $\nu$ of $S$, and denote the graphical function by $\hat u$, then $S_{\hat g} = \{\exp_p\big(\hat u(p)\nu(p)\big): p\in S\}$. Moreover, we have the following fact, proved in \cite{White91}.

\vspace{1em}
\textbf{Fact:} \textit{given $j\leq q-1$, $j\in\N$, there exist $\ep>0$ and $C>0$, such that for any two smooth metrics $\hat{g}_i$, $i=1, 2$, if $\|\hat{g}_i-g\|_{C^q}<\ep$, then we have}
\[ \|\hat u_1- \hat u_2 \|_{C^{j, \al}(S)} \leq C \|\hat g_1- \hat g_2\|_{C^q}. \]

We will construct diffeomorphisms $\Phi_i: M\to M$ which are identity away from a tubular neighborhood of $S$ and $\Phi_i$ maps $S$ to $S_{\hat{g}_i}$ respectively. Using such diffeomorphisms, we can identify $(\hat M_{\hat{g}_i}, \hat g_i)$ with $(\hat M_0, \Phi_i^* \hat g_i)$ so that the cylindrical widths are defined on a fixed underlying manifold $\hat M_0$. 

We will omit the sub-index when describing the construction of $\Phi$. Letting $t$ be the signed distance function over $S$ (see Section \ref{S:constancy} for more details), we have a Fermi coordinate system in a tubular neighborhood $N_{2\de}(S)$ of $S$ (for some $\delta>0$), such that
\[ N_{2\de}(S) = \{(x, t): x\in S, -2\de<t<2\de\}. \]
Choose a cutoff function $\phi: [-2\de, 2\de]\to [0, 1]$ such that $\phi\equiv 1$ on $[-\de, \de]$, $\phi\equiv 0$ on $[-2\de, 2\de]\setminus[-\frac{3}{2}\de, \frac{3}{2}\de]$, and $|\phi'|\leq \frac{3}{\de}$. Then $\|\phi\|_{C^{j, \al}}$ is uniformly bounded (depending only on $g$, $S$, $j$ and $\de$). Let $\hat g$ be a $C^q$ metric with $\|\hat{g}-g\|_{C^q}<\ep\ll 1$, $S_{\hat g}$ the associated minimal hypersurface arising from $S$, and $\hat u$ its graphical function over $S$. We define $\Phi=\Phi_{\hat g}: M\to M$ by
\[ 
\Phi (p) =
\left\{
\begin{aligned}
\big(x, t+\phi(t)\hat{u}(x)\big) & \quad \text{ if $p=(x, t)\in N_{2\de}(S)$ }\\
p & \quad \text{ if $p\notin N_{2\de}(S)$}
\end{aligned}
\right.. 
\]
It is easy to see (by the bound $|\phi'|\leq \frac{3}{\de}$) that $\Phi$ is a diffeomorphism when $\|\hat u\|_{L^\infty}$ is small enough, which can be satisfied by choosing $\ep$ small enough. Also $\|\Phi\|_{C^{j, \al}}$ is uniformly bounded.

We get the following estimate for the $C^0$ distance between $\Phi_1^*\hat g_1$ and $ \Phi_2^*\hat g_2$ computed with $g$:
\[
\begin{aligned}
\|\Phi_1^*\hat g_1- \Phi_2^*\hat g_2\|_{C^0}
&\leq 
\| \Phi_1^*\hat{g}_1 - \Phi_2^*\hat{g}_2 \|_{C^{j-1}} \\
& \leq  \| \Phi_1^*\hat{g}_1 - \Phi_1^*\hat{g}_2 \|_{C^{j-1}} + \| \Phi_1^*\hat{g}_2 - \Phi_2^*\hat{g}_2 \|_{C^{j-1}} \\
& \leq C \big( \| \hat{g}_1 - \hat{g}_2 \|_{C^{j-1}} + \|\Phi_1 - \Phi_2\|_{C^j} \big) \\
& \leq C \big( \| \hat{g}_1 - \hat{g}_2 \|_{C^j} + \|\hat u_1 - \hat u_2\|_{C^j} \big) \\
& \leq C \| \hat{g}_1 - \hat{g}_2 \|_{C^q}. 
\end{aligned}
\]

We are now ready to compare the cylindrical widths. Using Definition \ref{D:cylindrical width}, Theorem \ref{T:cylindrical Weyl Law}, and the same calculation as in \cite[Lemma 1]{Marques-Neves-Song19}, we obtain for any compact exhaustion $K_1\subset K_2\subset\dots\subset \mathcal{C}(\hat{M}_0)$:
\[  
\begin{aligned}
&\quad\,\, \tilde\omega_p(\hat M_{\hat{g}_1}, \hat g_1) - \tilde\omega_p(\hat M_{\hat{g}_2}, \hat g_2) \\
&  = \tilde\omega_p(\hat M_0, \Phi_1^*\hat g_1) - \tilde\omega_p(\hat M_0, \Phi_2^*\hat g_2) \\
& = \lim_{i\to \infty} \omega_p(K_i, h(\Phi_1^*\hat g_1)) - \lim_{i\to \infty} \omega_p(K_i, h(\Phi_2^*\hat g_2)) \\
& \leq \sup_i\Bigg(\Big(1+\sup_{v\neq 0, v\in TK_i}\frac{|h(\Phi_1^*\hat g_1)(v, v)- h(\Phi_2^*\hat g_2)(v, v)|}{h(\Phi_2^*\hat g_2)(v, v)} \Big)^{\frac{n}{2}} -1 \Bigg)\\
&\quad \cdot \lim_{i\to \infty}\omega_p(K_i, h(\Phi_2^*\hat g_2)) \\
& \leq \big( (1+C\|\Phi_1^*\hat g_1- \Phi_2^*\hat g_2\|_{C^0})^{\frac{n}{2}} -1\big) \tilde \omega_p(\hat M_{\hat{g}_2}, \hat g_2)\\
& \leq \big( (1+C\|\hat g_1 - \hat g_2\|_{C^q})^{\frac{n}{2}} -1\big) C \cdot  p \cdot \Area(\hat S_{\hat g_2}, \hat g_2)\\
& \leq \big( (1+C\|\hat g_1 - \hat g_2\|_{C^q})^{\frac{n}{2}} -1\big) C \cdot  p \cdot \Area(S, g)\\
& \leq C  \cdot p  \cdot\|\hat g_1-\hat g_2\|_{C^q},
\end{aligned}
\]
where $C$ depends only on $(M, g)$, $q$ and $S$. This finishes the proof.
\end{proof}

We end this section with a formula for the derivatives of the cylindrical widths whose proof is similar to that of Lemma 2 in \cite{Marques-Neves-Song19}. This formula holds at metrics satisfying a property slightly stronger than embedded bumpiness and involves multiplicity one min-max hypersurfaces, which is key to obtain the lower index bounds in Theorem \ref{T:main}.

\begin{lemma}
\label{L:derivative of cylindrical widths}
Fix a positive integer $ q \geq 3$ and let $g$ be a $C^q$ metric on $M$. Assume that $S$ is a {connected,} closed, embedded, minimal hypersurface which is 2-sided and strictly stable.  Let $\{\hat{g}_t\}_{t\in[0, 1]}$ be a smooth family of $C^q$ metrics, with $\hat{g}_0=g$. 
Assume that $g$ is an embedded bumpy metric and that for any $1$-sided, connected,
closed, embedded, minimal hypersurface in $(M,g)$, its $2$-sided double cover has no positive Jacobi field. Suppose also that, for some integer $p$, the cylindrical $p$-width function
$$t\to \tilde\omega_p(\hat M_{\hat{g}_t}, \hat{g}_t)$$ 
is differentiable at time $t=0$.

Then there exists a $C^q$ closed, embedded, minimal hypersurface $\Gamma_p$ contained in $\interior(\hat{M}_{g})$,  such that  
\[
\begin{split}
& \displaystyle \tilde{\omega}_{p}(\hat{M}_{g},g) = \Area_{g}(\Gamma_p), \quad \ind(\Gamma_p) = p,\\
\text {and}\quad &  \displaystyle \frac{d}{d t}\Big|_{t=0}\tilde{\omega}_{p}(\hat{M}_{\hat{g}_t},\hat{g}_t) = \int_{\Gamma_p}\frac{1}{2} \Tr_{\Gamma_p, g}  \big(\frac{\partial \hat{g}_t}{\partial t}\Big|_{t=0} \big)d\Gamma_p.
\end{split}
\]
\end{lemma}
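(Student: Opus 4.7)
The plan is to adapt the proof of \cite[Lemma 2]{Marques-Neves-Song19} to the cylindrical-width setting, combining the min-max realization of Theorem \ref{T:min-max theory for cylindrical width} with a multiplicity-one argument at the limit metric $g$. First, I would construct $\Gamma_p$ by approximating $g$ in $C^q$ by a sequence $g_k\to g$ of immersed bumpy metrics (which exist by \cite{White91,White17}). For each $k$, Theorem \ref{T:min-max theory for cylindrical width} supplies a $2$-sided, multiplicity-one, closed, embedded, minimal hypersurface $\Gamma_p^k\subset \interior(\hat M_{g_k})$ with $\ind(\Gamma_p^k)=p$ and $\Area_{g_k}(\Gamma_p^k)=\tilde\omega_p(\hat M_{g_k},g_k)$. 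Lemma \ref{L:cylindrical withs are Lipschitz} gives $\tilde\omega_p(\hat M_{g_k},g_k)\to \tilde\omega_p(\hat M_g,g)$, so areas and indices are uniformly bounded. Transporting the $\Gamma_p^k$ to the fixed underlying manifold $\hat M_0$ via the diffeomorphisms constructed in the proof of Lemma \ref{L:cylindrical withs are Lipschitz} and applying Sharp's compactness theorem, a subsequence converges as varifolds to $V=\sum_i m_i[\Gamma_p^{(i)}]$ with $\Gamma_p^{(i)}\subset \interior(\hat M_g)$ smooth embedded minimal hypersurfaces and $m_i\in\N$.

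The crucial step is to upgrade this varifold limit to a multiplicity-one smooth limit. Using the Jacobi-field production argument from the proof of the Multiplicity One Conjecture \cite{Zhou19} (see also \cite{Chodosh-Mantoulidis20}), if some $m_i\geq 2$, the renormalized graphical gap between consecutive sheets of $\Gamma_p^k$ near $\Gamma_p^{(i)}$ converges to a positive Jacobi field on $\Gamma_p^{(i)}$ when it is $2$-sided, or on its $2$-sided double cover when it is $1$-sided. Embedded bumpiness of $g$ excludes the first possibility, while the hypothesis on double covers of $1$-sided minimal hypersurfaces excludes the second. Hence all $m_i=1$, the convergence $\Gamma_p^k\to \Gamma_p:=\sqcup_i\Gamma_p^{(i)}$ is smooth and graphical, and $\Area_g(\Gamma_p)=\tilde\omega_p(\hat M_g,g)$. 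Smooth convergence and continuity of eigenvalues give $\ind(\Gamma_p)\geq p$, which matches the upper bound from Theorem \ref{T:min-max theory for cylindrical width} to yield $\ind(\Gamma_p)=p$.

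The derivative formula follows from a standard envelope argument. An optimal $p$-sweepout for $(\hat M_g,g)$ whose maximum is attained at $\Gamma_p$ can be pushed forward, via the diffeomorphisms of Lemma \ref{L:cylindrical withs are Lipschitz}, into a $p$-sweepout for $(\hat M_{\hat g_t},\hat g_t)$ whose maximum mass is at most $\Area_{\hat g_t}(\Gamma_p)+o(t)$. This yields the upper bound $\tilde\omega_p(\hat M_{\hat g_t},\hat g_t)\leq \Area_{\hat g_t}(\Gamma_p)+o(t)$, and repeating the whole construction based at $\hat g_t$ for small $t$ produces matching realizers giving the analogous lower bound up to $o(t)$. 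Since $t\mapsto \tilde\omega_p(\hat M_{\hat g_t},\hat g_t)$ is differentiable at $t=0$ by assumption, both one-sided derivatives must equal $\frac{d}{dt}|_{t=0}\Area_{\hat g_t}(\Gamma_p)$, which by the first variation formula is
$$\frac{d}{dt}\Big|_{t=0}\tilde\omega_p(\hat M_{\hat g_t},\hat g_t)=\int_{\Gamma_p}\frac{1}{2}\Tr_{\Gamma_p,g}\bigl(\partial_t \hat g_t|_{t=0}\bigr)\,d\Gamma_p.$$

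The main obstacle is the multiplicity-one step. It is the one genuinely new element over \cite[Lemma 2]{Marques-Neves-Song19}, since our approximating minimizers are only a priori multiplicity-one at the bumpy metrics $g_k$, and preserving this property at the limit $g$ requires carefully distinguishing the $2$-sided and $1$-sided components of the limit and ensuring in each case that the Jacobi field produced on the relevant cover is \emph{positive}, so that the embedded bumpiness and double-cover hypotheses yield the required contradictions. Once this is secured, the envelope argument for the derivative, the index identity, and the area identity all follow by routine cylindrical adaptations of the volume-spectrum argument in \cite{Marques-Neves-Song19}.
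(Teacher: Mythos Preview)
Your overall architecture (approximate by immersed bumpy metrics, apply Theorem~\ref{T:min-max theory for cylindrical width}, pass to the limit via Sharp compactness, and rule out higher multiplicity by the Jacobi-field dichotomy) matches the paper's. The multiplicity-one step and the index identity are handled correctly.

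The gap is in your derivative argument. There is no ``optimal $p$-sweepout whose maximum is attained at $\Gamma_p$'': the width is an infimum over homotopy classes and is not in general realized by a sweepout, and even when nearly-optimal sweepouts exist, $\Gamma_p$ need not appear as a slice. So your upper bound $\tilde\omega_p(\hat M_{\hat g_t},\hat g_t)\le \Area_{\hat g_t}(\Gamma_p)+o(t)$ is not justified as written. The paper avoids this entirely by a different organization: instead of first building $\Gamma_p$ at $g$ via an auxiliary sequence $g_k\to g$ and then separately proving the derivative formula, it takes a sequence $t_m\to 0$, approximates each $\hat g_{t_m}$ by an immersed bumpy metric $h_m$, and obtains realizers $\Gamma'(m)\subset(\hat M_{h_m},h_m)$ with $\tilde\omega_p(\hat M_{h_m},h_m)=\Area_{h_m}(\Gamma'(m))$ and index exactly $p$. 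The limit of the $\Gamma'(m)$ \emph{is} $\Gamma_p$, and since $g$ is embedded bumpy (a regular value of $\Pi$), the smooth multiplicity-one convergence forces $\Gamma'(m)$ to be the canonical deformation of $\Gamma_p$ to the metric $h_m$ for large $m$. Hence
\[
\frac{\tilde\omega_p(\hat M_{h_m},h_m)-\tilde\omega_p(\hat M_g,g)}{t_m}=\frac{\Area_{h_m}(\Gamma'(m))-\Area_g(\Gamma_p)}{t_m}
\]
converges to $\int_{\Gamma_p}\tfrac12\Tr_{\Gamma_p,g}(\partial_t\hat g_t|_{t=0})\,d\Gamma_p$ by first variation, with no need for a sweepout comparison. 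Your ``repeating the construction based at $\hat g_t$'' is precisely this idea; carried out as in the paper it yields the \emph{equality} directly, not merely one inequality, so the sweepout step should simply be deleted.
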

\begin{proof}
Take a sequence $\{t_m\}_{m\in\N}\subset [0, 1]$ with $t_m\to 0$. 
For each $m$, we find a smooth metric $h_m$ which is immersed bumpy and is arbitrarily close to $\hat{g}_{t_m}$ in the $C^q$ topology, so that 
$$\frac{d}{d t}\Big|_{t=0}\tilde{\omega}_{p}(\hat{M}_{\hat{g}_t},\hat{g}_t) =\lim_{m\to \infty}\frac{\tilde{\omega}_{p}(\hat{M}_{h_m},h_m)-\tilde{\omega}_{p}(\hat{M}_{g},g)}{t_m}$$
$$\text{ and } \frac{\partial \hat{g}_t}{\partial t}\Big|_{t=0} = \lim_{m\to \infty}\frac{h_m-g}{t_m}.$$
By Theorem \ref{T:min-max theory for cylindrical width}, for each $m$ there is a $2$-sided, closed, embedded, minimal hypersurface $\Gamma'(m)\subset (\interior(\hat{M}_{h_m}),h_m)$ such that
$$ \displaystyle \tilde{\omega}_{p}(\hat{M}_{h_m},h_m) = \Area_{h_m}(\Gamma'(m)), \quad \ind(\Gamma'(m)) = p.$$
By compactness \cite{Sharp17}, these bounds imply that after taking a subsequence, as $m\to \infty$, $\Gamma'(m)$ converges in the varifold sense to a closed embedded minimal hypersurface  $\Gamma_p$ in $(\hat{M}_g,g)$. By standard Jacobi field arguments, if the convergence was not smooth with multiplicity one, then either there is a $2$-sided component of $\Gamma_p$ with a nontrivial Jacobi field or a $1$-sided component of $\Gamma_p$ whose $2$-sided double cover has a positive Jacobi field. In both case, there is a contradiction with the assumptions on the metric $g$. Thus the convergence to $\Gamma_p$ is smooth with multiplicity one, and hence convergence also holds in the Hausdorff topology, in particular $\Gamma_p\subset \interior(\hat{M}_g)$ (as otherwise by the monotonicity formula a connected component of $\Gamma'(m)$ would lie in a tubular neighborhood of $S_{h_m}$, contradicting strict stability of $S_{h_m}$). Clearly the index cannot drop by embedded bumpiness of $g$, so 
$$\displaystyle \tilde{\omega}_{p}(\hat{M}_{g},g) = \Area_{g}(\Gamma_p), \quad \ind(\Gamma_p) = p.$$

Finally, since $g$ is embedded bumpy, it is a regular value of the projection $\Pi$ from the Banach manifold of $C^{2,\alpha}$ minimal embeddings to the space of $C^q$ metrics, defined in \cite{White91}; (see Section \ref{S:metric deformation} for more details). 
The formula for the derivative then follows readily, as in \cite[Lemma 2]{Marques-Neves-Song19}.
\end{proof}

\section{A quantitative constancy theorem}
\label{S:constancy}

The Constancy Theorem in geometric measure theory \cite{Allard72}\cite[\S 41]{Simon83} says that if a stationary $k$-varifold is supported in a smooth embedded $k$-dimensional submanifold, then the varifold is represented by a constant multiple of the smooth submanifold. In this section, we prove a quantified version for varifolds of codimension one. In particular, we will prove (Theorem \ref{T:constancy}) that if the total mass of a stationary $n$-varifold in an $(n+1)$-dimensional manifold is mostly concentrated in a tubular neighborhood of a closed embedded hypersurface, then the varifold distance between the normalized varifold and the hypersurface has a quantitative estimate.

Let $(M^{n+1},g)$ be a closed Riemannian manifold, $g$ a $C^q$ metric ($q \geq 3$), and $S\subset M$ a  2-sided,  closed, embedded hypersurface. Choose  a unit normal vector field $\nu$ along $S$. Let $N_\ep(S)$ be an $\ep$-tubular neighborhood of $S$ for some small enough $\ep>0$, so that the normal exponential map of $S$ has no focal point in $N_\ep(S)$. Consider a foliation $\{S_t\}_{-\ep< t < \ep}$ of $N_\ep(S)$ by a family of equidistant hypersurfaces, that is:
\[ S_t := \{ \exp_p(t\nu(p)): p\in S \}. \]
Here $t: N_\ep(S)\to (-\ep, \ep)$ is the signed distance to $S$. 
Denote the nearest point projection map from $N_\ep(S)$ to $S$ by
\[ \pi: N_\ep(S) \to S. \]
The vector field $\nu:=\nabla t$ is a parallel vector field, i.e. $\nabla_\nu \nu=0$, and $\nu|_{S_t}$ is the unit normals of $S_t$ for all $t\in (-\ep, \ep)$. Note that if $\ga: (-\ep, \ep)\to N_\ep(p)$ is an integral curve of $\nu$ with $\ga(0)\in S$, then $\pi(\ga(t))=\ga(0)$ for all $t\in (-\ep, \ep)$.

Given a varifold $V$ and a domain $U$, $||V||$ is its mass and $||V||(U)$ the mass of the restriction of $V$ to $U$. {Before stating the theorem, let us define the $\mathbf{F}$-distance for varifolds. The original definition in \cite[2.1(19)]{Pitts} goes as follows: we first embed $(M,g)$ isometrically in a Euclidean space $\R^P$ endowed with the standard inner product, then the Grassmannian $G_{\R^P}(P, n)$ is equipped with a natural distance function, and the set of continuous functions with Lipschitz constant $1$ is well defined. Then if $V,W$ are two $n$-varifolds in $M$, 
$$\mathbf{F}(V,W) := \sup\{|V(f) - W(f)| ; \text{ $f:G_{\R^P}(P, n) \to \R$, $|f|\leq 1$, $\Lip(f)\leq 1$} \}.$$
We will use an essentially equivalent but more intrinsic definition, which makes it easier to see how $\mathbf{F}$ changes as the metric $g$ varies. Given a metric $g$, the Grassmannian bundle $G(n+1, n)$ of $n$-planes in $M$ is identified with a $\mathbb{Z}_2$ quotient of the unit tangent bundle $UTM$ of $M$, which has a natural metric and thus a natural distance function $\dist_g$. This is the distance function on $G(n+1, n)$ that we will use to define the Lipschitz constant of a function $f:G(n+1,n)\to \R$, and it is clearly equivalent to that defined by Pitts up to a constant depending on the isometric embedding. Note that the $\mathbf{F}$-distance induces the usual topology on the set of varifolds with mass at most $1$.}

\begin{theorem}
\label{T:constancy}
Let $(M, g)$, $S$, $\{S_t\}$, $N_\ep(S)$ be as above.
Then there exists a constant $C(g,S)$ depending only on $g$ and $S$, such that for any $0<\delta<\ep$, and for any stationary $n$-varifold $V$ in $(M, g)$ satisfying
\begin{equation}
\label{E: assumption for constancy}
\frac{||V||\big(M \backslash N_{\delta}({S})\big)}{||V||} \leq \delta, 
\end{equation}
we have 
\begin{equation}
\label{E: estimates for varifold distance}
\mathbf{F}(\frac{[S]}{||S||},\frac{V}{||V||}) \leq C(g, S)\sqrt{\delta}.
\end{equation}
Moreover the constant $C(g, S)$ can be chosen to be locally bounded in the set of pairs $(g, S)$ endowed with the product $C^3$ topology.

\end{theorem}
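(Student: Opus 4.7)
\emph{Plan.} The strategy is to apply the first variation formula for the stationary varifold $V$ to two families of test vector fields adapted to the Fermi coordinates $(y,t)\in S\times(-\ep,\ep)$ around $S$, using a cutoff $\chi\in C^\infty_c((-\ep/2,\ep/2))$ satisfying $\chi\equiv 1$ on $[-\ep/4,\ep/4]$. The normal-type field $X=t\chi(t)\nu$ will control the \emph{tilt} of the tangent planes of $V$ with respect to the equidistant foliation $\{S_t\}$; tangential fields of the form $\chi(t)Y$, where $Y$ is the Fermi lift of a vector field $\tilde Y$ on $S$, will then, after projection by $\pi$, yield approximate constancy of the mass measure induced on $S$. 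The two estimates are combined to bound $\mathbf{F}([S]/||S||,V/||V||)$ against an arbitrary $1$-Lipschitz function on $G(n+1,n)$.

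\emph{Step 1 (tilt estimate).} Using $\nabla t=\nu$ and $\nabla_\nu\nu=0$, the first-variation integrand for $X=t\chi(t)\nu$ equals, on the core $N_{\ep/4}(S)$, the expression $\sigma(x,P)+t\cdot\Tr_P\nabla\nu$, where
\[
\sigma(x,P):=\sum_{i=1}^n\langle\nu(x),e_i\rangle^2=\sin^2\theta(P,T_xS_{t(x)})\geq 0
\]
for $(e_i)$ an orthonormal basis of $P$. The term $t\cdot\Tr_P\nabla\nu$ is bounded by $C\delta$ on $N_\delta(S)$ and by $C(g,S)$ on $N_{\ep/4}(S)\setminus N_\delta(S)$, where $V$ carries mass $\leq\delta||V||$ by hypothesis; the cutoff annulus $N_{\ep/2}(S)\setminus N_{\ep/4}(S)$ is controlled in the same way. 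Stationarity $\int\Div_V X\,dV=0$ then yields
\[
\int_{N_{\ep/4}(S)}\sigma\,dV\leq C(g,S)\,\delta\,||V||.
\]

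\emph{Step 2 (near-constancy after projection).} Given a smooth vector field $\tilde Y$ on $S$, let $Y$ be its Fermi extension to $N_\ep(S)$, so that $Y\perp\nu$ and $Y$ is tangent to every $S_t$. Applying first variation to $\chi(t)Y$, the $\chi'$-term $\chi'\sum_i\langle\nu,e_i\rangle\langle Y,e_i\rangle$ is bounded by $|Y|\sqrt\sigma$ (Cauchy--Schwarz), hence its integral is $O(\sqrt\delta\,||V||)$ via Step~1 and $\int\sqrt\sigma\,dV\leq \sqrt{||V||\cdot \int\sigma\,dV}\leq C\sqrt\delta\,||V||$. Next, $\Div_P Y$ is replaced first by $\Div_{T_xS_{t(x)}}Y$ (error $\leq C\|\tilde Y\|_{C^1}\sqrt\sigma$) and then by $\Div_{T_{\pi(x)}S}\tilde Y(\pi(x))$ (error $\leq C\|\tilde Y\|_{C^1}|t|$). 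Setting $\mu:=\pi_\#(||V||\lfloor N_{\ep/4}(S))$, we obtain
\[
\Big|\int_S\Div_S\tilde Y\,d\mu\Big|\leq C(g,S)\,\|\tilde Y\|_{C^1}\,\sqrt\delta\,||V||.
\]
Given a Lipschitz $\psi$ on $S$ with $\int_S\psi\,dA_S=0$, solve $\Delta_S\phi=\psi$ on the closed manifold $S$ and take $\tilde Y=\nabla\phi$; Schauder estimates give $\|\tilde Y\|_{C^1}\leq C(g,S)\Lip(\psi)$, so
\[
\Big|\int_S\psi\,d\mu\Big|\leq C(g,S)\,\Lip(\psi)\,\sqrt\delta\,||V||.
\]
Combined with $\mu(S)=||V||(1+O(\delta))$, this says $\mu$ is $O(\sqrt\delta\,||V||)$-close to the constant-density measure $(||V||/||S||)\,dA_S$ in the dual of Lipschitz functions on $S$.

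\emph{Step 3 (conclusion and main difficulty).} For $f:G(n+1,n)\to\R$ with $|f|,\Lip(f)\leq 1$, the contribution of $M\setminus N_\delta(S)$ to $\int f\,dV$ is at most $\delta\,||V||$. On $N_\delta(S)$, the Grassmannian bound $d_G((x,P),(\pi(x),T_{\pi(x)}S))\leq C(g,S)(\sqrt{\sigma(P)}+|t(x)|)$ combined with $\Lip(f)\leq 1$ allows one to replace $f(x,P)$ by $\hat f(\pi(x)):=f(\pi(x),T_{\pi(x)}S)$, with error $O(\sqrt\delta\,||V||)$ by Step~1. The remaining term is $\int_S\hat f\,d\mu$ (up to $O(\delta\,||V||)$ from exchanging $\mu$ and $\pi_\#(||V||\lfloor N_\delta)$); since $\hat f$ is Lipschitz on $S$ with constant controlled by $(g,S)$, Step~2 gives $\int_S\hat f\,d\mu=||V||\,\tilde S(f)+O(\sqrt\delta\,||V||)$, where $\tilde S=[S]/||S||$. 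Dividing by $||V||$ yields $|\tilde V(f)-\tilde S(f)|\leq C(g,S)\sqrt\delta$, which is \eqref{E: estimates for varifold distance}. Local boundedness of $C(g,S)$ in the $C^3$ product topology is inherited from continuous dependence of the Fermi tubular radius $\ep$, the $C^2$-norm of $\nabla\nu$, the Lipschitz constant of $y\mapsto T_yS$ as a map into $G(n+1,n)$, and the Schauder constant on $(S,g|_S)$. The principal technical obstacle is the bookkeeping in Step~2: each cutoff error and divergence-comparison error must fit within the $O(\sqrt\delta\,||V||)$ budget, and the loss of one factor through Cauchy--Schwarz (passing from $\sigma$ to $\sqrt\sigma$) is precisely what forces the $\sqrt\delta$ rate in the final bound.
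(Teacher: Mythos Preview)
Your proposal is correct and follows essentially the same strategy as the paper's proof: a tilt estimate via the first variation with the normal field $t\nu$ (the paper extends $t$ globally with a $C^2$ bound rather than using your cutoff $\chi$), followed by solving $\Delta_S\phi=\psi$ on $S$ and testing the first variation against the gradient of a constant-in-$t$ extension of $\phi$ to control the tangential distribution of mass. The only cosmetic difference is that the paper works directly with the integrals rather than introducing the pushforward measure $\mu=\pi_\#(\|V\|\lfloor N_{\ep/4}(S))$, and handles the boundary/cutoff terms by global $C^2$ extensions of $t$ and $\varphi$ instead of your cutoff $\chi$; both routes lead to the same $\sqrt\delta$ bound for the same reason (Cauchy--Schwarz on $\sqrt\sigma$).
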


\begin{remark}
\begin{itemize}[leftmargin=1cm]
\item
Suppose that $S$ is a $2$-sided, closed, embedded, strictly stable, minimal hypersurface of $(M,g)$. We have previously seen that for a {$C^q$} metric $\hat{g}$ in a small $C^q$-neighborhood $\mathcal{U}$ of $g$ ($q \geq 3$), there is a unique stable minimal hypersurface $\hat{S}\subset (M,\hat{g})$ coming from $S$ and $C^3$ close to $S$. Thus there is an even smaller $C^q$-neighborhood $\mathcal{U}'\subset \mathcal{U} $ of $g$, such that when the theorem above is applied to the pairs $(\hat{g},\hat{S})$ where $\hat{g}\in \mathcal{U}'$, the constant $C(\hat{g},\hat{S})$ can be chosen uniformly, i.e. independently of $\hat{g}\in \mathcal{U}'$.
\item {In the theorem, the bound in $\sqrt{\delta}$ is probably not sharp as $\delta$ gets small.}
\end{itemize}
\end{remark}

\begin{proof}
To simplify the notations, we can assume $\|V\|(M)=1$. Let $\mu_V$ be the Radon measure on $M$ associated with $V$, and let us write
\[ d\mu_S=\frac{1}{\|S\|}dS. \]
We use $G_n(U)$ to denote the $G(n+1, n)$-Grassmannian bundle of unoriented $n$-planes over a subset $U\subset M$. 

We first prove that, under the assumptions of the theorem,  $V$ is supported over $n$-planes that are ``almost parallel" to those of the foliations $\{S_t\}$. Recall that $\nu$ is the vector field $\nabla t$. Given an $n$-plane $P\in G_n(M)$ in the tangent space $T_xM$, denote by $\nu_P$ a choice of the unit normal of $P$; (note that the following statement does not depend on the choice of $\nu_P$). 

\begin{lemma1}
There exists a constant $C(g, S)$ depending only on $g$ and $S$, such that for any $0<\delta<\epsilon$,
\[ \int_{G_n( N_{\delta}(S))}(1-(\nu(x)\cdot \nu_P)^2)dV(x, P) <C(g,S)\delta. \]
\end{lemma1}
\begin{proof}
We can extend the signed distance function to $M$ with a uniform $C^2$-bound:
\[ t: M\to \R,\quad \text{ with } ||t||_{C^2(M)}\leq C(g,S). \]
Consider the vector field $X=t\nabla t$, and plug $X$ into the first variation formula for $V$:
\[
\begin{aligned}
0 & = \int_{G_n(M)} \Div_P X(x) dV(x, P) \\
   & = \int_{G_n(N_{\delta}(S))} + \int_{G_n(M\setminus N_{\delta}(S))} \Div_P (t\nabla t) dV(x, P) \\
   & = \int_{G_n(N_{\delta}(S))} |\pi_P(\nu(x))|^2 dV(x, P) + \int_{G_n(N_{\delta}(S))} t\, \Div_P(\nabla t) dV(x, P)\\
   & + \int_{G_n(M\setminus N_{\delta}(S))} \Div_P (t\nabla t) dV(x, P).
\end{aligned}
\]
Here $\pi_P: T_xM\to P$ is the orthogonal projection map. And we naturally have
\[ |\pi_P(\nu(x))|^2 = 1-(\nu(x)\cdot \nu_P)^2. \] 
The conclusion then follows directly from the assumption (\ref{E: assumption for constancy}), the fact that $|t|\leq \delta$ on $N_{\delta}$(S), and the $C^2$-bound for $t$.
\end{proof}

\vspace{1em}
We now continue the proof of Theorem \ref{T:constancy}. 
For $x\in N_\ep(S)$, denote by $S_x$ the leaf in the foliation $\{S_t\}$ containing $x$. Since $\nu(x)$ is the unit normal of $S_x$, we get for a constant $C>0$:
\[ 1- (\nu(x)\cdot \nu_P)^2 = \sin^2 \angle(\nu(x), \nu_P) \geq \frac{1}{C} \dist_g(P, T_xS_x)^2. \]
Here $\dist_g(\cdot, \cdot)$ is the choice of distance function on the Grassmannian manifold $G(n+1, n)$ we explained before the statement of the theorem.
Hence by the Cauchy-Schwartz inequality we have
\begin{equation}
\label{E: first estimate}
\int_{G_n(N_{\delta}(S))} \dist_g(P, T_xS_x) dV(x, P) < C(g, S)\sqrt{\delta}. 
\end{equation}

By the definition of the varifold distance $\mathbf{F}$, to prove (\ref{E: estimates for varifold distance}), we need to estimate the following quantity
\[ \Big| \int_{G_n(M)} f(x, P) dV(x, P) - \int_S f(p, T_pS)d\mu_S(p)\Big| \]
for any Lipschitz function $f$ defined on the $G(n+1, n)$-Grassmannian bundle of $M$ (endowed with $\dist_g(\cdot, \cdot)$) with $\|f\|_\infty\leq 1$ and Lipschitz constant $\operatorname{Lip}(f)\leq 1$.

For any point $p\in S$, write $f_S(p)=f(p, T_pS)$ and denote by $f_{av}=\int_S f_S d\mu_S$ the average of $f_S$ over $S$, then
\[ |f_{av}|\leq 1. \] 
We can solve the following PDE on $S$
\begin{equation}
\label{E: laplace equation for f}
f_S= f_{av} + \lap_S \varphi, 
\end{equation}
under the assumption that the average $\int_S \varphi d\mu_S=0.$ By standard elliptic estimates, we have
\[ \| \varphi \|_{C^{2, \al}(S)}\leq C(g,S). \]
We can then find an extension of $\varphi$ to the whole manifold $M$ (still denoted by $\varphi$) such that on $N_\ep(S)$,
\[ \varphi(x) = \varphi(\pi(x)), \quad \text{ for } x \in N_\ep(S),  \]
and the $C^2$ norm of $\varphi$ is still bounded by a constant $C(g,S)$.
By construction $\nabla_\nu \varphi=0$ in $N_\epsilon(S)$.

Given any local coordinate system $\{x^i\}_{i=1, \cdots, n}$ on $S$, we can extend it to a local coordinate system $\{x^1, \cdots, x^n, t\}$ on $N_\ep(S)$ by adding the variable $t$. Note that we naturally have $\partial_t \varphi=0$. We will use this coordinate system to estimate the Hessian matrix of $\varphi$. First we have
\[ \big((\Hess \varphi)|_S\big)_{ij} = (\Hess^S \varphi|_S)_{ij}, \quad \text{for } i, j=1,\cdots, n. \]
Given any $x\in N_\ep(S)$, we have that
\[
\begin{aligned}
\Hess \varphi (x) 
& =
\begin{bmatrix}
\varphi_{tt}-\nabla_{\nabla_\nu \nu}\varphi & \varphi_{jt}-\Ga^k_{j t}(x) \varphi_k (x) \\
\varphi_{it}-\Ga^k_{i t}(x) \varphi_k(x) & \varphi_{ij}(x) -\Ga^k_{ij}(x)\varphi_k(x)
\end{bmatrix}\\
& =
\begin{bmatrix}
0 & -\Ga^k_{j t}(x) \varphi_k (\pi(x)) \\
-\Ga^k_{i t}(x) \varphi_k(\pi(x)) & \varphi_{ij}(\pi(x)) -\Ga^k_{ij}(x)\varphi_k(\pi(x))
\end{bmatrix},
\end{aligned}
\]
where we used the Einstein sum convention for $k=1, \cdots, n$.
Therefore, we see that for any $x\in N_{\delta}(S)$
\[  \big|(\Hess \varphi)_{ij} (x) - (\Hess \varphi)_{ij} (\pi(x)) \big| \leq C(g, S)\delta. \]
So for any $x\in N_{\delta}(S)$,
\begin{equation}
\label{E: trace hessian}
\begin{split}
&\quad\, \big| \Tr_{S_x}(\Hess\varphi)(x) - \Tr_{S} (\Hess\varphi)(\pi(x))\big| \\
& = \big| g^{ij}(x) (\Hess \varphi)_{ij} (x) - g^{ij}(\pi(x)) (\Hess\varphi)_{ij}(\pi(x)) \big| \\
& \leq C(g, S)\delta.
\end{split}
\end{equation}
Another observation is that 
\[ \lap_S \varphi = \Tr_S (\Hess^S \varphi) = \Tr_S (\Hess \varphi) \]
where $\Hess^S$ is the Hessian intrinsic to $S$.

Remember that we need to estimate
\[ \Big| \int_{G_n(M)} f(x, P) dV(x, P) - \int_S f(p, T_pS)d\mu_S(p)\Big|. \]
First we clearly have by (\ref{E: assumption for constancy}) and (\ref{E: first estimate})
\[
\begin{aligned}
 \Big| \int_{G_n(M)} & 
 f(x, P) dV(x, P) - \int_S f(p, T_pS)d\mu_S(p)\Big| \\
&\leq \delta+ \Big| \int_{G_n(N_{\delta}(S))} f(x, P) dV(x, P) - \int_S f(p, T_pS)d\mu_S(p) \Big| \\
& \leq C(g,S)\sqrt{\delta}+ \Big| \int_{N_{\delta}(S)} f(x, T_xS_x) d\mu_V(x) - \int_S f(p, T_pS)d\mu_S(p)  \Big|\\
& \leq C(g,S)\sqrt{\delta}+ \Big| \int_{N_{\delta}(S)} f(\pi(x), T_{\pi(x)}S) d\mu_V(x) - \int_S f(p, T_pS)d\mu_S(p)  \Big|.
\end{aligned}
\]

Next, by (\ref{E: laplace equation for f}), (\ref{E: trace hessian}) and (\ref{E: first estimate}), we get
\[
\begin{aligned}
\Big| \int_{N_{\delta}(S)} 
& f(\pi(x), T_{\pi(x)} S) d\mu_V(x)  - \int_S f(p, T_pS)d\mu_S(p)\Big|\\
&\leq\Big| \int_{N_{\delta}(S)} \big(f_{av}+\lap_S\varphi (\pi(x)) \big)d\mu_V(x)  - f_{av}\Big|\\
& \leq  \delta + \Big| \int_{N_{\delta}(S)} \lap_S\varphi (\pi(x)) d\mu_V(x) \Big|\\
& =  \delta + \Big| \int_{N_{\delta}(S)} \Tr_{S} (\Hess \varphi)(\pi(x)) d\mu_V(x) \Big|\\
& \leq C(g, S)\delta + \Big| \int_{N_{\delta}(S)} \Tr_{S_x} (\Hess \varphi)(x) d\mu_V(x) \Big|\\
& \leq C(g, S)\delta + \Big| \int_{G_n(N_{\delta}(S))} \big[ \Tr_{S_x} (\Hess \varphi)(x)- \Tr_{P}(\Hess \varphi)(x) \big] dV(x, P) \Big|\\
& \quad + \Big| \int_{G_n(N_{\delta}(S))} \Tr_P (\Hess \varphi)(x) dV(x, P) \Big|\\
& \leq C(g, S)\delta +C( \|\varphi\|_{C^2})  \Big| \int_{G_n(N_{\delta}(S))} \dist_g(T_xS_x, P) dV(x, P) \Big| \\
& \quad + \Big| \int_{G_n(N_{\delta}(S))} \Tr_P (\Hess \varphi)(x) dV(x, P) \Big|\\
& \leq C(g, S)\sqrt{\delta} + \Big| \int_{G_n(N_{\delta}(S))} \Div_P (\nabla\varphi)(x) dV(x, P) \Big| \\
& \leq C(g, S)\sqrt{\delta} + \Big| \int_{G_n(M) }\Div_P (\nabla\varphi)(x) dV(x, P) \Big| \leq  C(g, S)\sqrt{\delta}.
\end{aligned}
\]
As usual the constant $C(g,S)$ can change from line to line. Note that in the last line,  we used the fact that $V$ is stationary, i.e. $\int_{G_n(M)} \Div_P X(x) dV(x, P)=0$ for any $C^1$ vector field $X$ on $M$.

Finally all these estimates imply 
\[\Big| \int_{G_n(M)} f(x, P) dV(x, P) - \int_S f(p, T_pS)d\mu_S(p)\Big| \leq C(g,S)\sqrt{\delta},\]
 and it is clear from our computations that the constant $C(g,S)$ can be chosen to be uniformly bounded in a $C^3$-neighborhood of $(g,S)$.
 \end{proof}

\section{Metric deformation and approximation by minimal hypersurfaces}
\label{S:metric deformation}

Let $(M^{n+1},g)$ be a connected closed $(n+1)$-dimensional manifold with $3\leq n+1\leq 7$. The main result of this section is a deformation result saying that, given a connected, closed, embedded, strictly stable, $2$-sided, minimal hypersurface $S$, by perturbing the metric slightly one can construct a minimal hypersurface of arbitrarily large area and Morse index, approximating $S$ after renormalization.

We first start by recalling some facts related to White's Structure Theorem for the space of minimal submanifolds \cite[Theorem 2.1]{White91}. Take $q \geq 7$ from now on.
Recall that a closed embedded minimal hypersurface $\Gamma\subset (M,g)$ is non-degenerate if it has no nontrivial Jacobi fields. A non-degenerate minimal hypersurface is isolated in any $C^{2,\alpha}$ topology. By \cite{White91}, if $\Gamma\subset (M,g)$  is non-degenerate, then for any metric $g'$ close enough to $g$ in the $C^q$ topology there is a unique minimal hypersurface in $(M,g')$ close to $\Gamma$ in the $C^{j,\alpha}$ topology ($j\leq q-1$) and it depends in a $C^{q-j}$ way on $g'$. 
We will say that $\Gamma\subset (M,g)$ deforms to another minimal hypersurface in $(M,g')$, and by abuse of notations, we will usually still denote this new hypersurface (minimal with respect to $g'$) by $\Gamma$.

Recall that $\mathbf{\Gamma}^{(q)}$ denotes the set of $C^q$ metrics on $M$. Let $\mathfrak{M}$ be the set of pairs $(\gamma, [u])$ as in \cite{White91}, where $\gamma\in \mathbf{\Gamma}^{(q)}$ and $u$ is a $C^{q-2,\alpha}$ minimal embedding of an $n$-dimensional connected closed manifold $\Gamma$ inside $(M,\gamma)$. Note that a $C^{2,\alpha}$ minimal embbeding in a $C^q$ metric is $C^{q,\alpha}$ by elliptic regularity; (see for instance \cite[Theorem 1.1, (4)]{White91}). By \cite[Theorem 2.1]{White91}, this space has the structure of a separable $C^{2}$ Banach manifold. The natural projection $\Pi:\mathfrak{M} \to \mathbf{\Gamma}^{(q)}$ is a $C^{2}$ Fredholm map with Fredholm index $0$. A pair $(\gamma,[u])$ is a critical point of $\Pi$ if and only if the minimal embedding $u$ into $(M,\gamma)$ has a nontrivial Jacobi field. Hence a regular value of the projection $\Pi$ is an embedded bumpy metric. In this paper we need to carefully distinguish embedded bumpy metrics from immersed bumpy metrics: while the above general discussion about transversality characterizes regular values of $\Pi$ as the embedded bumpy metrics, it is possible that such metrics are not immersed bumpy. Min-max theorems which produce minimal hypersurfaces with Morse index lower bounds generally need immersed bumpiness, not just embedded bumpiness. The following paragraphs are a preparation to deal with this issue.

Denote by $R_{\mathfrak M}$ and $C_{\mathfrak M}\subset \mathfrak M$ respectively the sets of regular and critical points of $\Pi$. Given a regular pair $(\gamma, [u])\in R_{\mathfrak M}$, if $[u]$ is $1$-sided, its connected $2$-sided double cover may still carry nontrivial Jacobi fields. In this paragraph, we describe the structure of the set of such $1$-sided pairs, whose $2$-sided double covers carry a positive Jacobi field. Set 
\[
\begin{aligned}
\mathfrak S_{\textrm{1-sided}} := 
&\{(\gamma, [u])\in R_{\mathfrak M}: \textrm{$[u]$ is $1$-sided, but its $2$-sided}\\
&\textrm{  double cover carries a positive Jacobi field}\}.  
\end{aligned}
\] 
\begin{lemma}
The set $\mathfrak S_{\textrm{1-sided}}$  is a $C^2$ Banach submanifold of $\mathfrak M$, of codimension $1$. 
\end{lemma}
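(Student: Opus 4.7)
The plan is to present $\mathfrak{S}_{\textrm{1-sided}}$ locally, around each of its points, as the regular zero set of a $C^2$ real-valued function. Fix $(\gamma_0,[u_0]) \in \mathfrak{S}_{\textrm{1-sided}}$. Since $(\gamma_0,[u_0])$ is a regular point of $\Pi$, White's Structure Theorem provides a neighborhood $\mathcal{V} \subset R_{\mathfrak{M}}$ of $(\gamma_0,[u_0])$ and a neighborhood $\mathcal{U} \subset \mathbf{\Gamma}^{(q)}$ of $\gamma_0$ such that $\Pi|_{\mathcal{V}} \colon \mathcal{V} \to \mathcal{U}$ is a $C^2$ diffeomorphism; under this identification, the minimal embedding $[u_\gamma]$ and its connected $2$-sided double cover $\tilde{\Gamma}_\gamma$, with deck transformation $\sigma_\gamma$, depend $C^{q-1,\al}$-smoothly on $\gamma$. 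Shrinking $\mathcal{U}$ if necessary, $[u_\gamma]$ remains $1$-sided for every $\gamma \in \mathcal{U}$, since $1$-sidedness is an open condition on embeddings.

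For each $\gamma \in \mathcal{U}$, the Jacobi operator $L_\gamma$ on $\tilde{\Gamma}_\gamma$ is a self-adjoint elliptic operator on a connected closed manifold, so its first eigenvalue $\lambda_1(\gamma)$ is simple, with a strictly positive ground state $\phi_\gamma$. By Appendix \ref{A:Differentiability of the first eigenvalue}, the map $\Lambda \colon \mathcal{V} \to \R$ given by $\Lambda(\gamma,[u]) := \lambda_1(\gamma)$ is of class $C^2$. By the maximum principle, the only eigenfunctions of $L_\gamma$ of constant sign are the multiples of $\phi_\gamma$, so $\tilde{\Gamma}_\gamma$ admits a positive Jacobi field iff $\lambda_1(\gamma) = 0$. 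Hence $\mathfrak{S}_{\textrm{1-sided}} \cap \mathcal{V} = \Lambda^{-1}(0)$, and the lemma will follow from the implicit function theorem once $0$ is shown to be a regular value of $\Lambda$.

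The standard first-order perturbation formula for simple eigenvalues (Appendix \ref{A:Differentiability of the first eigenvalue}) asserts that, for any metric variation $h$ of $\gamma_0$,
\[
d\Lambda_{(\gamma_0,[u_0])}(h) \;=\; \int_{\tilde{\Gamma}_0} \dot{L}(h)\, \phi \cdot \phi \, d\mathrm{vol}_{\tilde{\Gamma}_0},
\]
where $\phi = \phi_{\gamma_0}$ is $L^2$-normalized and $\dot{L}(h)$ is the first-order variation of $L_\gamma$ along the deformation $\gamma_t = \gamma_0 + t h$, with $[u_t]$ determined by White's IFT branch in $\mathcal{V}$. To produce a non-zero derivative I would choose $h$ supported in a small tubular neighborhood of a point $p \in \tilde{\Gamma}_0$, essentially of the form $h \approx \chi\,\nu^\ast \otimes \nu^\ast$ for a non-negative bump function $\chi$ concentrated at $p$: such an $h$ leaves the induced metric and the second fundamental form of $\tilde{\Gamma}_0$ unaltered to leading order, while contributing a localized positive term to $\Ric(\nu,\nu)$ along $\tilde{\Gamma}_0$.

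The main obstacle is this derivative computation. The $\sigma_{\gamma_0}$-invariance of $\phi$ and the $\sigma_{\gamma_0}$-antisymmetry of the infinitesimal displacement $\dot{u}$ prescribed by the IFT (which lifts a section of the non-orientable normal bundle of $[u_0]$) together imply that every term in $\dot{L}(h)$ linear in $\dot{u}$ vanishes upon integration against $\phi^2$. Thus $d\Lambda(h)$ reduces to the ``direct'' variations of the induced Laplacian, of $|A|^2$, and of $\Ric(\nu,\nu)$ on $\tilde{\Gamma}_0$; for the localized $h \approx \chi\,\nu^\ast \otimes \nu^\ast$ described above, the first two variations vanish to leading order and only the strictly positive Ricci contribution survives, yielding $d\Lambda(h) > 0$. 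Once $d\Lambda \neq 0$ is verified at every point of $\Lambda^{-1}(0)$, the implicit function theorem on Banach manifolds shows $\Lambda^{-1}(0)$ is a codimension-one $C^2$ Banach submanifold of $\mathcal{V}$; letting $(\gamma_0,[u_0])$ vary, this proves $\mathfrak{S}_{\textrm{1-sided}}$ is a $C^2$ Banach submanifold of $\mathfrak{M}$ of codimension $1$.
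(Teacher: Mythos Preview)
Your overall strategy matches the paper's: localise via White's diffeomorphism $\Pi|_{\mathcal V}$, define $\Lambda=\lambda_1$ of the Jacobi operator on the double cover, check that $\mathfrak{S}_{\textrm{1-sided}}\cap\mathcal V=\Lambda^{-1}(0)$, and show $d\Lambda\neq 0$. The difficulty, as you note, is the derivative computation, and here there is a genuine gap.

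Your symmetry argument does not go through. The lifted scalar $\tilde{\dot u}$ is indeed $\sigma$-antisymmetric, but the actual normal variation of the immersion is the vector field $\tilde{\dot u}\,\tilde\nu$, and since $\tilde\nu$ is also $\sigma$-antisymmetric, this vector field is $\sigma$-\emph{invariant}. Consequently the operator $\dot L$ coming from surface motion is $\sigma$-equivariant, not $\sigma$-odd: for instance the variation of $|A|^2$ under normal motion is a well-defined function on the base $\Gamma$, hence $\sigma$-invariant on $\tilde\Gamma$. So $\int_{\tilde\Gamma}(\dot L\phi)\phi$ has no reason to vanish, and the $\dot u$-contribution cannot be discarded this way. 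Separately, for your choice $h\approx\chi\,\nu^\ast\otimes\nu^\ast$ you assert but do not verify that the direct Ricci variation has a sign; this computation is not automatic and the surface does \emph{not} stay minimal under this $h$, so $\dot u\neq 0$ in general.

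The paper sidesteps both issues with a device from \cite[Lemma~2.6]{White17}: take the \emph{conformal} variation $g_t=e^{2tf}g$ where $f$ and $Df$ vanish on $u(\Gamma)$ and $\text{Hess}\,f(\nu,\nu)\geq 0$ (strict somewhere). Then $u(\Gamma)$ remains exactly minimal for all $t$ (so $\dot u\equiv 0$ and no symmetry argument is needed), the induced metric and $|A|^2$ are unchanged, and only $\Ric_t(\nu,\nu)$ moves, by an amount with a definite sign. Differentiating the Rayleigh quotient then gives $\frac{d}{dt}\big|_{t=0}\lambda_1>0$ directly. This is the missing idea that makes the derivative computation clean.
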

\begin{proof} 
By \cite{White91}, we can cover $R_{\mathfrak M}$ by countably many open sets $\{\mathcal{U}_i\}_{i\in\N}$ so that $\Pi$ maps each $\mathcal{U}_i$ diffeomorphically onto its image $\Pi(\mathcal{U}_i)\subset \mathbf{\Gamma}^{(q)}$. Therefore we only need to consider $\mathfrak S_{\textrm{1-sided}}\cap \mathcal{U}_i$, and we will omit the subindex $i$. Given $(\gamma, [u])\in \mathfrak S_{\textrm{1-sided}}\cap \mathcal{U}$, where $u:\Gamma\to M$, we know that in a neighborhood $\mathcal{V}\subset\mathcal{U}$ of $(\gamma, [u])$, all other pairs $(\gamma', [u'])$ come from graphical deformations from $(\gamma, [u])$ and hence are $1$-sided. Consider the following functional defined in this neighborhood $\mathcal{V}$
\[ \widetilde\lambda_1: \gamma' \to \widetilde\lambda_1(\gamma',{[\widetilde{u'}]}), \]
where $\widetilde\lambda_1(\gamma',{[\widetilde{u'}]})$ is the first eigenvalue of the Jacobi operator of the $2$-sided double cover $\widetilde{u'} : \widetilde{\Gamma}\to M$ of $u'$. Note that $\mathfrak{M}$ and $\Pi$ are $C^{2}$ by \cite{White91}, and the $C^{q-2}$ embedding $u'$ depends in a $C^{2}$ manner with respect to $\gamma'\in \mathbf{\Gamma}^{(q)}$. Hence the coefficients of that Jacobi operator, viewed as elements of $C^{q-4}(\tilde{\Gamma})$, also depend in a $C^{2}$ manner with respect to $\gamma'\in \mathbf{\Gamma}^{(q)}$. By Lemma \ref{L:Differentiability of the first eigenvalue} in the Appendix, $\widetilde\lambda_1: \Pi(\mathcal V)\subset\mathbf{\Gamma}^{(q)}\to \R$ is a {$C^{2}$} map. 

\vspace{1em}
\textbf{Claim:} \textit{The differential of $\widetilde\lambda_1$ is nonzero at any metric in $\Pi(\mathcal V)\subset\mathbf{\Gamma}^{(q)}$.}

\begin{proof}[Proof of Claim]
It is essentially a computation appearing in the proof of \cite[Lemma 2.6]{White17}.  Given $(\gamma, [u])\in \mathcal V$, $\Gamma$ and $\widetilde{\Gamma}$, consider the following metric perturbation.  Choose a small open neighborhood $U\subset M$ such that $u(\Gamma) \cap U$ is a $2$-sided $n$-ball with a choice of local unit normal $\nu$. Since by elliptic regularity $u$ is a $C^q$ embedding, $\nu$ is a $C^{q-1}$ map. Let $f\in C^{q-1}_c(U)$ be a compactly supported function such that
\[ f=0, Df=0 \text{ and } \Hess f(\nu, \nu)\geq 0 \text{ on } U\cap u(\Gamma), \] 
and
\[ \Hess f(\nu, \nu) >0 \text{ on some proper open subset of } U\cap u(\Gamma). \]
Consider the $1$-parameter smooth perturbation $g_t = e^{2tf} g$ in $\mathbf{\Gamma}^{(q-1)}$. The discussions preceding the Claim still hold for $q-1$ replacing $q$. Note that a $C^q$ metric which is a regular value of $\Pi:\mathfrak{M}\to \mathbf{\Gamma}^{(q-1)}$ is also a regular value of $\Pi:\mathfrak{M}\to \mathbf{\Gamma}^{(q)}$ and vice versa.
One can check that $u(\Gamma)$ remains minimal under $g_t$ and the restrictions $(g_t)|_{u(\Gamma)}$ remain unchanged. Denote by $\varphi_{1, t}$ a normalized first eigenfunction of the connected $2$-sided double cover $\tilde{u}(\widetilde\Gamma)$ of $u(\Gamma)$ under $g_t$, which by Lemma \ref{L:Differentiability of the first eigenvalue} of the Appendix, can be chosen to be {$C^{1}$} in $t$, then
\[ 
\begin{aligned}
\widetilde\lambda_1(g_t, [\tilde u]) 
&= \int_{\tilde u(\widetilde\Gamma)} |\nabla \varphi_{1, t}|^2 - (\Ric_t(\nu, \nu)+|A_t|^2) \varphi_{1, t}^2\\
&= \int_{\tilde u(\widetilde\Gamma)} |\nabla \varphi_{1, t}|^2 - (\Ric(\nu, \nu)+|A|^2) \varphi_{1, t}^2 \\
& + t \int_{\tilde u(\widetilde\Gamma)} (n-1)\Hess f(\nu, \nu) \varphi_{1, t}^2.\\
\end{aligned} 
\] 
The first integral above has vanishing $t$-derivative at $t=0$ since $\varphi_{1,0}$ is a first eigenfunction of the Jacobi operator at $t=0$. The second integral has positive $t$-derivative at $t=0$ by assumption on $f$ and because $\varphi_{1,0}$ is nowhere vanishing. Now approximate the variation $g_t$ by a smooth variation $\tilde{g}_t$ of $C^q$ metrics, so that the $t$-derivative of $\widetilde\lambda_1(\tilde{g}_t, [\tilde{u}_t])$ at $t=0$ is still positive. That proves the claim. 
\end{proof}

The Implicit Function Theorem implies that the intersection $\mathfrak S_{\textrm{1-sided}}\cap \mathcal{V}$ is a codimension $1$, $C^2$ Banach submanifold of $\mathcal{V}$. Since there are only countably many such $1$-sided pairs for each fixed $\gamma$, we finish the proof.
\end{proof}

Let $g_t:[0,1]\to \mathbf{\Gamma}^{(q)}$ be a smooth $1$-parameter family of metrics on $M$. By Smale's transversality theorem \cite{Smale65}, since the regularity of $\Pi$ is $C^2$, we can perturb $g_t$ slightly in the $C^\infty$ topology to get another smooth $1$-parameter family $\tilde{g}_t:[0,1]\to \mathbf{\Gamma}^{(q)}$ that is transversal to both $\Pi:\mathfrak{M} \to \mathbf{\Gamma}^{(q)}$ and $\Pi\big|_{\mathfrak S_{\textrm{1-sided}}}: \mathfrak S_{\textrm{1-sided}} \to \mathbf{\Gamma}^{(q)}$. In particular $\Pi^{-1}(\{\tilde{g}_t\}) \subset \mathfrak{M}$ is a smooth $1$-dimensional submanifold with boundary of $\mathfrak{M}$. By Sard theorem, for almost every $t\in [0,1]$ in the sense of Lebesgue measure, $\tilde{g}_t$ is a regular value of the restriction $\Pi\big|_{\Pi^{-1}(\{\tilde{g}_t\})} : \Pi^{-1}(\{\tilde{g}_t\}) \to \mathbf{\Gamma}^{(q)}$ and thus a regular value of $\Pi : \mathfrak{M} \to \mathbf{\Gamma}^{(q)}$ by transversality of $\{\tilde{g}_t\}$. In other words, for almost every $t\in [0,1]$, $\tilde{g}_t$ is an embedded bumpy metric. Moreover, by transversality $\Pi^{-1}(\{\tilde{g}_t\})$ intersects $\mathfrak S_{\textrm{1-sided}}$ locally only at finitely many points, and so for almost every $t\in [0,1]$ in the sense of Lebesgue measure, for any $1$-sided minimal embedding $[u]$ under $\tilde{g}_t$, its $2$-sided double cover does not have positive Jacobi fields. Thus these metrics satisfy a condition stronger than embedded bumpiness but a priori weaker than immersed bumpiness, nevertheless we have seen in the proof of Lemma \ref{L:derivative of cylindrical widths} that this condition was enough to get multiplicity one and lower Morse index bounds in min-max constructions.

In the following deformation theorem, the norms $||.||_{C^m}$ are computed with a fixed metric $g$.

\begin{theorem}
\label{T:deform}
Let $(M,g)$ be a closed Riemannian manifold of dimension $3\leq n+1\leq 7$ endowed with a smooth metric $g$. For any integer $m>0$ and for any connected, closed, embedded, minimal hypersurface $S\subset (M,g)$ which is $2$-sided and strictly stable, there is a smooth metric  $g'$ with
\[ ||g'-g||_{C^m} \leq \frac{1}{m}, \]
so that $S$ deforms to a $2$-sided, strictly stable, connected, closed, embedded, minimal hypersurface, still denoted by $S$, in $(M,g')$, and there is a non-degenerate, closed, embedded, minimal hypersurface $\Sigma \subset (M,g')$ satisfying the following with respect to the metric $g'$: 
\vspace{0.5em}
\begin{enumerate}[label=$(\roman*)$]
\addtolength{\itemsep}{0.7em}
\item 
\hfill
$\displaystyle \Sigma \cap S =\varnothing$,
\hfill\refstepcounter{equation}

\item 
\hfill
$\displaystyle m < ||\Sigma||$,
\hfill\refstepcounter{equation}

\item 
\hfill
$\displaystyle \big| \ind(\Sigma)\cdot ||\Sigma||^{-1} - ||S||^{-1}\big| < \frac{1}{m}$,
\hfill\refstepcounter{equation}

\item 
\hfill
$\displaystyle \mathbf{F}(\frac{[S]}{||S||}, \frac{[\Sigma]}{||\Sigma||}) < \frac{1}{\log(||\Sigma||)}$.
\hfill\refstepcounter{equation}

\end{enumerate}
\vspace{0.5em}
\end{theorem}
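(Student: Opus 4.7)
The plan is to run a pigeonhole argument on a one-parameter family of metrics, parallel to the generic equidistribution argument of \cite{Marques-Neves-Song19}, but using the cylindrical Weyl law (Theorem \ref{T:cylindrical Weyl Law}) and the cylindrical derivative formula (Lemma \ref{L:derivative of cylindrical widths}) in place of the classical ones, and then combining the outcome with the quantitative Constancy Theorem \ref{T:constancy} to produce the $\mathbf{F}$-estimate. The role of the tube $N_{\delta_0}(S)$ is central: I will perturb conformally only away from $S$, so that $S$ itself is unaffected and the leading term $p\cdot\|S\|$ of $\tilde\omega_p$ is constant along the path. Any variation of $\tilde\omega_p$ must then come from the sub-leading $O(p^{1/(n+1)})$ term, and since this variation is produced by a non-negative integrand supported away from $S$, the min-max hypersurfaces must be almost entirely pushed into $N_{\delta_0}(S)$ for most parameter values.

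Concretely, fix $\delta_0>0$ (to be chosen below in terms of $p$ and $m$). Choose a smooth cutoff $\phi:M\to[0,1]$ with $\phi\equiv 0$ on $N_{\delta_0}(S)$, $\phi\equiv 1$ on $M\setminus N_{2\delta_0}(S)$ and $\|\phi\|_{C^m}\lesssim\delta_0^{-m}$, and set $h:=\phi g$, $g_t:=g+th$ for $t\in[0,\tau]$, with $\tau>0$ small enough that $\|g_t-g\|_{C^m}\leq 1/(2m)$. Using Smale's transversality theorem exactly as in the discussion preceding this theorem, I approximate the path $\{g_t\}$ by a $C^\infty$-close path $\{\tilde g_t\}$ such that for a.e.\ $t\in[0,\tau]$ the metric $\tilde g_t$ is embedded bumpy and every $1$-sided embedded minimal hypersurface in $(M,\tilde g_t)$ has a $2$-sided double cover with no positive Jacobi field; the total $C^m$ deviation can be kept below $1/m$. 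Since $h$ vanishes on $N_{\delta_0}(S)$, for each $t$ the hypersurface $S$ deforms to a nearby connected, closed, embedded, $2$-sided, strictly stable minimal hypersurface, still denoted $S$.

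By Lemma \ref{L:cylindrical withs are Lipschitz} the map $t\mapsto\tilde\omega_p(\hat M_{\tilde g_t},\tilde g_t)$ is Lipschitz, hence differentiable almost everywhere, and at each such generic $t$ Lemma \ref{L:derivative of cylindrical widths} supplies an embedded non-degenerate minimal hypersurface $\Gamma_p(t)\subset\interior(\hat M_{\tilde g_t})$ with $\ind(\Gamma_p(t))=p$, $\|\Gamma_p(t)\|=\tilde\omega_p(\tilde g_t)$, and
\[\frac{d}{dt}\tilde\omega_p(\tilde g_t)=\tfrac12\int_{\Gamma_p(t)}\Tr_{\Gamma_p(t),\tilde g_t}(\partial_t\tilde g_t)\,d\Gamma_p(t)\geq \tfrac{n}{2}\,\|\Gamma_p(t)\|(M\setminus N_{2\delta_0}(S))-\eta,\]
with $\eta$ a negligible error from the Smale perturbation. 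Integrating over $[0,\tau]$ and using Theorem \ref{T:cylindrical Weyl Law} (whose remainder constant is locally uniform in the metric) the total variation of $\tilde\omega_p$ is at most $Cp^{1/(n+1)}$, so the mean value theorem yields $t_*\in[0,\tau]$ with
\[\frac{\|\Gamma_p(t_*)\|(M\setminus N_{2\delta_0}(S))}{\|\Gamma_p(t_*)\|}\leq \frac{C}{\tau\,\|S\|\,p^{n/(n+1)}}.\]
Theorem \ref{T:constancy} applied with tube width $2\delta_0$ then gives $\mathbf F([S]/\|S\|,[\Gamma_p(t_*)]/\|\Gamma_p(t_*)\|)\leq C\sqrt{\delta_0}$, provided the displayed ratio is itself bounded by $2\delta_0$.

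The main technical step is to choose $\delta_0,\tau,p$ so that every constraint holds at once. Since $\tau\lesssim\delta_0^m/m$ (forced by $\|g_t-g\|_{C^m}\leq 1/(2m)$ and $\|h\|_{C^m}\lesssim\delta_0^{-m}$), the constancy threshold becomes $p^{n/(n+1)}\gtrsim m\,\delta_0^{-(m+1)}$, while the target bound $C\sqrt{\delta_0}\leq 1/\log\|\Gamma_p(t_*)\|$ (using $\|\Gamma_p(t_*)\|\asymp p\|S\|$) forces $\delta_0\lesssim 1/\log^2 p$. Both are satisfied by setting $\delta_0=c/\log^2 p$ and taking $p$ large enough that the polynomial $p^{n/(n+1)}$ dominates the resulting polylogarithm in $p$. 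Finally, setting $\Sigma:=\Gamma_p(t_*)$ and $g':=\tilde g_{t_*}$: property (i) is immediate since $\Sigma\subset\interior(\hat M_{g'})$; (ii) follows from $\|\Sigma\|\geq p\|S\|/2$; (iii) follows from $\ind(\Sigma)/\|\Sigma\|=p/\tilde\omega_p(g')\to\|S\|^{-1}$ by Theorem \ref{T:cylindrical Weyl Law}; and (iv) is the constancy estimate above. The hardest part is this bookkeeping, coupled with the need to secure the a.e.\ genericity of $\tilde g_t$ through transversality while maintaining $\|g'-g\|_{C^m}\leq 1/m$ and while preserving the strict stability of $S$.
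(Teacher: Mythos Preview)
Your argument is correct and complete in spirit, and it takes a route that is \emph{dual} to the paper's. The paper chooses the conformal factor $\varphi$ supported \emph{inside} a thin tube around $S$ (with $\varphi\equiv 1$ near $S$), so that $\Area_{g_t}(S)$ increases at rate $\tfrac{n}{2}\Area(S)$; it then matches the derivative of $\tfrac{1}{p_k}\tilde\omega_{p_k}$ against $\tfrac{d}{dt}\Area(S_{\tilde g_t})$ using a two-sided pigeonhole (upper bound from $\Tr\partial_t g_t\le n$, lower bound from the cylindrical Weyl law and the Fundamental Theorem of Calculus), and deduces $\tfrac{1}{\|\Sigma\|}\int_\Sigma\varphi\approx 1$, i.e.\ most mass lies where $\varphi=1$. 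You instead take $\phi=1-\varphi$ (supported \emph{away} from $S$), so $\Area(S)$ is constant along $g_t$; then the total increase of $\tilde\omega_p$ over $[0,\tau]$ is bounded purely by the remainder $Cp^{1/(n+1)}$, and a one-sided pigeonhole on the nonnegative integrand directly bounds the mass of $\Gamma_p$ outside $N_{2\delta_0}(S)$. Your bookkeeping with $\delta_0\asymp(\log p)^{-2}$ and $\tau\asymp\delta_0^{m}/m$ is equivalent to the paper's calibration $k\sim(\log p_k)^2$, $t_k=\exp(-\delta\sqrt{k})$; both satisfy the crucial relation $p^{n/(n+1)}\gg \tau^{-1}\delta_0^{-1}$. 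Your version is arguably cleaner since it avoids computing $\tfrac{d}{dt}\Area_{g_t}(S_{g_t})$ and the two-sided estimate (4.10); the paper's version makes the link between $\tfrac{1}{p}\tilde\omega_p$ and $\Area(S)$ more visibly symmetric.

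Two small points to tighten. First, $\tilde g_{t_*}$ as produced by Smale transversality lies in $\mathbf{\Gamma}^{(q)}$, not necessarily in $C^\infty$; you need a final small smooth perturbation of $\tilde g_{t_*}$ to a smooth $g'$, which is harmless since $\Sigma$ and $S$ are non-degenerate and all four inequalities are open. Second, when you write ``mean value theorem'' you are really using absolute continuity of $t\mapsto\tilde\omega_p(\tilde g_t)$ (from Lemma \ref{L:cylindrical withs are Lipschitz}) to integrate the a.e.\ derivative and then pigeonhole on the full-measure set $\mathcal A'$ where both the derivative exists and $\tilde g_t$ is ``nice''; this is exactly how the paper proceeds, and it is worth saying so since $\Gamma_p(t)$ is only defined on $\mathcal A'$.
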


\begin{proof}
We are given $(M^{n+1},g)$ ($3\leq n+1\leq 7$), $m>0$ and a minimal hypersurface $S$ as in the statement. For any metric $\hat{g}$ in a small $C^{3}$-neighborhood of $g$, the $2$-sided strictly stable minimal hypersurface coming from $S\subset (M, g)$ will be denoted by $S_{\hat{g}}$.

For any metric $\hat{g}$ in this neighborhood, we cut $(M,\hat{g})$ along $S_{\hat{g}}$ and, after continuously choosing a connected component, we get a compact manifold $(\hat{M},\hat{g})$; (more rigorously, $\hat{M}$ is the metric completion of the chosen component of $(M\backslash S_{\hat{g}},\hat{g})$ and the lift of $\hat{g}$ to $\hat{M}$ is still denoted by $\hat{g}$).  In Section \ref{S:core manifold}, we described how to use min-max theory to define a sequence of min-max widths $\tilde{\omega}_p$ and construct a sequence of closed minimal hypersurfaces $\Gamma_p $ embedded in the interior of $(\hat{M},\hat{g})$, satisfying some properties related to their areas and Morse indices.

In the following paragraph, we construct certain perturbations of the given metric $g$. Let $k$ be a large integer to be fixed later. Consider a smooth nonnegative symmetric $2$-tensor $h := \varphi g$ on $M$, where $\varphi: {M} \to [0,1]$ is a smooth cut-off function supported in a $1/(2k)$-neighborhood of $S$, such that $\varphi=1$ in a tubular neighborhood of $S$.  It is not hard to check that we can choose $k$ arbitrarily large and a corresponding cut-off function $\varphi$ so that 
\begin{equation} \label{kk}
|\nabla^{m'} h| \leq C_{m,S,g}  k^{m'} \text{ for all }  0\leq m'\leq m
\end{equation}
where $C_{m,S,g}$ is a constant depending on $m$, $S$ and $(M,g)$.
Now let 
$$t_k:= \exp(-\delta \sqrt{k})$$ 
where $\delta\in(0,1)$ will be chosen later, and consider the $1$-parameter family of smooth metrics on $M$:
$$g_t := g+th, \quad t\in [0,t_k].$$
By (\ref{kk}) and the choice of $t_k$, any such $g_t$ satisfies
$$||g_t -g||_{C^m} < \frac{1}{m}$$ for $k$ large enough, the norm $||.||_{C^m}$ being measured with $g$. 
In general, $g_t$ may not be bumpy for most $t\in[0,t_k]$, so we need perturb this family in order to obtain minimal hypersurfaces with lower index bound. Take $q> 7+m$. The discussion before the theorem gives an arbitrarily small perturbation of $\{g_t\}$ (viewed as $C^q$ metrics) in the $C^\infty$ topology into another family $\{\tilde{g}_t\}$ of $C^q$ metrics, such that there is a full Lebesgue measure set $\mathcal{A} \subset [0,t_k]$ satisfying the following: for any $t\in \mathcal{A}$, $\tilde{g}_t$ is ``nice'' in the sense that it is embedded bumpy (no closed embedded minimal hypersurface has a nontrivial Jacobi field) and the $2$-sided double cover of any $1$-sided closed embedded minimal hypersurface has no positive Jacobi field (i.e. it is not weakly stable). These metrics $\tilde{g}_t$ on $M$ lift to metrics on $\hat{M}$ still denoted by $\tilde{g}_t$.  We can also ensure that for a constant $\mathbf{c}$ depending only on $(M, g)$, $S$ the following holds: for $k$ large enough, for all $t\in [0,t_k]$, and all $n$-plane $P$ in the Grassmannian of tangent $n$-planes of $\hat{M}$,
\begin{equation} 
\label{gribouilli}
\begin{split}
& ||\tilde{g}_t -g||_{C^m} < \frac{1}{m}, \\
& \big|\Tr_{P, \tilde{g}_t} \frac{\partial \tilde{g}_t}{\partial t} - n\varphi \big| \leq \big|\Tr_{P, \tilde{g}_t} \frac{\partial \tilde{g}_t}{\partial t}  - \Tr_{P, {g}_t} \frac{\partial g_t}{\partial t} \big| +\big| \Tr_{P, g_t} \frac{\partial g_t}{\partial t}  - n\varphi \big|\\
& \quad\quad\quad\quad\quad\quad\quad\,\,\, \leq \exp(-\delta \sqrt{k}) + \big| \Tr_{P, g_t}(\varphi g) - n\varphi \big|\\
& \quad\quad\quad\quad\quad\quad\quad\,\,\, = \exp(-\delta \sqrt{k}) + \big| \frac{n\varphi}{1+t\varphi} - n\varphi \big|\\
& \quad\quad\quad\quad\quad\quad\quad\,\,\, \leq \mathbf{c} \exp(-\delta \sqrt{k}),\\
& \Tr_{P, \tilde{g}_t} \frac{\partial \tilde{g}_t}{\partial t} \leq n+ \mathbf{c} \exp(-\delta \sqrt{k}).\\
\end{split}
\end{equation}

The areas of $S_{{g}_t}$ and $S_{\tilde{g}_t}$ are differentiable in $t$, and by a computation, $S_{{g}_t} = S$ remains minimal as $t$ varies; moreover 
\[ \frac{d}{d t} \Area_{{g}_t}(S_{{g}_t} )= \frac{n}{2} (1+t)^{\frac{n-2}{2}} \Area_{{g}}(S) = \frac{n}{2(1+t)}\Area_{g_t}(S_{g_t}).\] 
For any $k$, by choosing the perturbation $\{\tilde{g}_t\}$ of $\{g_t\}$ smaller if necessary, we can then assume that for all $t\in [0,t_k]$,
\begin{equation} \label{kangaroo1}
\begin{split}
\big|  \frac{d}{d t} \Area_{\tilde{g}_t}(S_{\tilde{g}_t}) - \frac{n}{2} \Area_{\tilde{g}_t}(S_{\tilde{g}_t}) \big| & \leq \mathbf{c}_0\frac{n t_k}{2(1+t_k)}\\
&\leq  \mathbf{c}_0 \exp(-\delta \sqrt{k}),
\end{split}
\end{equation}
where $\mathbf{c}_0$ is a constant depending only on $(M,g)$ and $S$.

We are now ready to describe a quantitative perturbation argument in the same spirit as \cite{Marques-Neves-Song19}, using the cylindrical Weyl Law with remainder term. Consider the cylindrical widths $\{\tilde\omega_p(\hat M, \tilde{g}_t)\}$ of the connected compact manifold with stable boundary $(\hat M, \tilde{g}_t)$, as defined in Section \ref{S:core manifold}. Roughly speaking, taking derivatives of $\tilde\omega_p(\hat M, \tilde{g}_t)$ for large enough $p$ will give rise to the desired minimal hypersurfaces. Recall that the cylindrical Weyl Law Theorem \ref{T:cylindrical Weyl Law} implies for all $p\in\N$:
\begin{equation} 
\label{cylindricalWeyl}
\Area_{\tilde{g}_t}(S_{\tilde{g}_t}) \leq \frac{1 }{p} \tilde{\omega}_p(\hat{M},\tilde{g}_t) \leq \Area_{\tilde{g}_t}(S_{\tilde{g}_t})+ Cp^{-{\frac{n}{n+1}}} \quad\text{ for all $t\in [0,t_k]$}
\end{equation}
for a constant $C>0$ depending on $(M,g)$, $S$, but independent of $t\in [0,t_k]$.  Define
\[ p_k:= [ \exp(2\delta \sqrt{k}) ] +1. \]
The derivative of $\tilde{\omega}_{{p_k}}(\hat{M},\tilde{g}_t)$ exists at almost every $s \in [0,t_k]$ in the sense of Lebesgue measure, because it is a Lipschitz function by Lemma \ref{L:cylindrical withs are Lipschitz}. Let $\mathcal{A}' \subset [0,1]$ be the full Lebesgue measure set of times in $\mathcal{A}$ where the derivative of $\tilde{\omega}_{{p_k}}(\hat{M},\tilde{g}_t)$ exists. Recall that $\mathcal{A}$ is the set of times where the metric $\tilde{g}_t$ is ``nice''. By Lemma \ref{L:derivative of cylindrical widths}, at each $s\in\mathcal{A}'$, there is a closed embedded minimal hypersurface $\Gamma_{p_k}\subset\interior(\hat{M}, \tilde{g}_s)$ (which is necessarily non-degenerate) such that 
\begin{equation} \label{cookie}
\begin{split}
& \displaystyle \tilde{\omega}_{{p_k}}(\hat{M},\tilde{g}_s) =  \Area(\Gamma_{p_k}), \quad \ind(\Gamma_{p_k}) = p_k\quad  \text {and}\\
& \displaystyle \frac{d}{d t}\Big|_{t=s}\tilde{\omega}_{{p_k}}(\hat{M},\tilde{g}_t) =  \int_{\Gamma_{p_k}}\frac{1}{2} \Tr_{\Gamma_{p_k}, \tilde{g}_s}  \big(\frac{\partial \tilde{g}_t}{\partial t}\Big|_{t=s} \big)d\Gamma_{p_k}.
\end{split}
\end{equation}

Consequently at  a  time $s\in\mathcal{A}'$, since $ \Tr_{\Gamma_{p_k}, \tilde{g}_s} (\frac{\partial \tilde{g}_t}{\partial t}|_{t=s}) \leq n+\mathbf{c} \exp(-\delta \sqrt{k})$ by (\ref{gribouilli}), we get
\[ \frac{d}{d t}\Big|_{t=s} \frac{1 }{{p_k}} \tilde{\omega}_{{p_k}}(\hat{M},\tilde{g}_t) \leq (\frac{n}{2} + \mathbf{c} \exp(-\delta \sqrt{k}))\frac{1 }{{p_k}} \tilde{\omega}_{{p_k}}(\hat{M},\tilde{g}_s). \]
Next, by the cylindrical Weyl law (\ref{cylindricalWeyl}) and (\ref{kangaroo1}), we obtain for each $k$ and all $s\in \mathcal{A}'$:
\begin{align}  \label{kangaroo2}
\begin{split}
\frac{d}{d t}\Big|_{t=s} \frac{1}{{p_k}} \tilde{\omega}_{{p_k}}(\hat{M},\tilde{g}_t)     
& \leq (\frac{n}{2}+ \mathbf{c} \exp(-\delta \sqrt{k}))  \frac{1}{{p_k}} \tilde{\omega}_{{p_k}}(\hat{M},\tilde{g}_s)   \\
& \leq \frac{n}{2} \Area_{\tilde{g}_s}(S_{\tilde{g}_s})  + \mathbf{c} \exp(-\delta \sqrt{k}) \Area_{\tilde{g}_s}(S_{\tilde{g}_s})\\
& + (\frac{n}{2}+ \mathbf{c} \exp(-\delta \sqrt{k})) C {p_k}^{-{\frac{n}{n+1}}}\\
& \leq \frac{n}{2} \Area_{\tilde{g}_s}(S_{\tilde{g}_s})  + \mathbf{c}_1  \exp(-\delta \sqrt{k}) + \mathbf{c}_1 {p_k}^{-{\frac{n}{n+1}}}\\
& \leq \frac{d}{d t} \Big|_{t=s} \Area_{\tilde{g}_t}(S_{\tilde{g}_t}) +\mathbf{c}_1\exp(-\delta \sqrt{k}) + \mathbf{c}_1 {p_k}^{-{\frac{n}{n+1}}},
\end{split}
\end{align}
where $\mathbf{c}_1$ depends only on $(M,g)$, $S$ and can change from line to line.

Since we defined $p_k:= [\exp(2\delta \sqrt{k})]+1$, we have $p_k^{-\frac{n}{n+1}} = o(t_k)$. Thus for $k$ large enough (depending on $\delta$, $\mathbf{c}_0$, $\mathbf{c}_1$), by the cylindrical Weyl law (\ref{cylindricalWeyl}) and (\ref{kangaroo2}), there exists $s\in \mathcal{A}'$ such that: 

\begin{equation} \label{boundderivative}
-\frac{1}{k^2}\leq \frac{d}{d t}\Big|_{t=s} \frac{1 }{p_k} \tilde{\omega}_{p_k}(\hat{M},\tilde{g}_t) -  \frac{d}{d t}\Big|_{t=s}\Area_{\tilde{g}_t}(S_{\tilde{g}_t})\leq \frac{1}{k^2}.
\end{equation}
Here the upper bound is ensured by (\ref{kangaroo2}), and then the lower bound at some $s$ follows from (\ref{cylindricalWeyl}) and the Fundamental Theorem of Calculus.

By (\ref{cookie}) we find  a closed non-degenerate minimal hypersurface $\Sigma$ embedded in the interior of $(\hat{M},\tilde{g}_s)$, which projects to an embedded minimal hypersurface $(M,\tilde{g}_s)$ still denoted by $\Sigma$, with support disjoint from $S_{\tilde{g}_s}$. This minimal hypersurface $\Sigma$ has multiplicity one, and index exactly $p_k$. By (\ref{cylindricalWeyl}) its mass satisfies for $k$ large enough 
\begin{equation} 
\label{reminder}
||\Sigma||= \tilde{\omega}_{p_k}(\hat{M},\tilde{g}_s) >m\quad \text{and}\quad  \big|p_k\cdot ||\Sigma||^{-1} - ||S_{\tilde{g}_s}||^{-1}\big| < \frac{1}{m}.
\end{equation}
Furthermore by (\ref{cookie}), (\ref{boundderivative}), (\ref{gribouilli}), (\ref{kangaroo1}) and (\ref{cylindricalWeyl}), we have
\begin{align*}
\frac{1}{k^2} 
& \geq \Big| \frac{1}{p_k} \int_\Sigma \frac{1}{2} \Tr_{\Sigma, \tilde{g}_s} \big(\frac{\partial \tilde{g}_t}{\partial t}\Big|_{t=s}\big) d\Sigma - \frac{d}{d t}\Big|_{t=s} \Area_{\tilde{g}_t} (S_{\tilde{g}_t})\Big| \\
& \geq \Big|  \frac{\tilde{\omega}_{p_k}}{p_k}  \frac{1}{||\Sigma||}\int_\Sigma \frac{n\varphi}{2}  d\Sigma -  \frac{d}{d t}\Big|_{t=s} \Area_{\tilde{g}_t} (S_{\tilde{g}_t}) \Big| - \mathbf{c}_2 \exp(-\delta \sqrt{k}) \\
& \geq \Big| \frac{\tilde{\omega}_{p_k}}{p_k}  \frac{1}{||\Sigma||}\int_\Sigma \frac{n\varphi}{2}  d\Sigma  - \frac{n}{2}\Area_{\tilde{g}_s} (S_{\tilde{g}_s})\Big|  - \mathbf{c}_2 \exp(-\delta \sqrt{k})\\
& \geq \frac{n}{2} \Area_{\tilde{g}_s} (S_{\tilde{g}_s}) \Big|\frac{1}{||\Sigma||}\int_\Sigma \varphi  d\Sigma - 1 \Big| - \mathbf{c}_2 \exp(-\delta \sqrt{k}) -  \mathbf{c}_2 \exp(-\frac{2\delta \sqrt{k} n}{n+1}),
\end{align*}
where $\mathbf{c}_2$ is only depending on $(M,g)$, $S$ and can change from line to line. From that we get for $k$ large enough:
\begin{equation}\label{magentta}
\big| \frac{1}{||\Sigma||}\int_\Sigma \varphi d \Sigma -1\big|< \frac{1}{k}.
\end{equation}
Note that for $k$ large enough, $S_{\tilde{g}_s}$ lies in a $1/(2k)$-neighborhood of $S$ and recall that $\varphi$ is also supported in a $1/(2k)$-neighborhood of $S$ by definition. Inequality (\ref{magentta}) then implies that the varifold $\Sigma$ has most of its mass concentrated in a $1/k$-neighborhood of ${S}_{\tilde{g}_s}$: 
\begin{equation} 
\label{mostly inside}
\frac{||\Sigma \backslash N_{1/k}({S_{\tilde{g}_s}})||}{||\Sigma||} \leq \frac{1}{k}.
\end{equation}
By Section \ref{S:constancy} and our Quantitative Constancy Theorem \ref{T:constancy}, (\ref{mostly inside}) implies that for some constant $C(g,S)$ depending on $(M, g)$ and $S$: 
\begin{equation} 
\label{octopus1}
\mathbf{F}(\frac{[S_{\tilde{g}_s}]}{||S_{\tilde{g}_s}||},\frac{[\Sigma]}{||\Sigma||}) \leq \frac{C(g,S)}{\sqrt{k}}.
\end{equation}
 Since 
 $p_k= [\exp(2\delta \sqrt{k})]+1$ by definition, and by (\ref{reminder}), we get for large $k$:
\begin{equation} \label{octopus2}
\frac{C(g,S)}{\sqrt{k}} \leq \frac{2\delta C(g,S)}{\log(p_k)} \leq \frac{3\delta C(g,S)}{\log(||\Sigma||)}.
\end{equation}
Hence we can take $\delta < \frac{1}{3C(g,S)}$ and a corresponding large $k$, then we slightly perturb the $C^q$ metric  $\tilde{g}_s$ to a smooth metric $g'$ and the theorem is proved by (\ref{reminder}), (\ref{octopus1}), (\ref{octopus2}) and non-degeneracy of $\Sigma$.
 \end{proof}

\section{Generic scarring for minimal hypersurfaces along stable hypersurfaces}
\label{S:scarring}
In this section, we prove our main Theorem \ref{T:main}, as well as a generic scarring result for most closed Riemannian $3$-manifolds, Theorem \ref{T:3dimension}.

\subsection{Proof of Theorem \ref{T:main}}
Now we are ready to prove the main theorem using  Theorem \ref{T:deform}.
\begin{proof}[Proof of Theorem \ref{T:main}]

Let $A>0$. 
We say that a smooth metric $g$ on $M$ satisfies Property ($P_A)$ if:
\begin{itemize}
\item[($P_A)$] any connected, closed, embedded, stable, minimal hypersurface $S\subset (M,g)$ with area at most $A$ has a non-degenerate $2$-sided double cover (which may be disconnected). If moreover $S$ is $2$-sided, there is a closed, embedded, non-degenerate, minimal hypersurface $\Sigma$ satisfying:
\vspace{0.5em}
\begin{itemize}
\addtolength{\itemsep}{0.7em}
\item 
\hfill
$\displaystyle \Sigma \cap S =\varnothing$,
\hfill\refstepcounter{equation}

\item 
\hfill
$\displaystyle A < ||\Sigma||$,
\hfill\refstepcounter{equation}

\item 
\hfill
$\displaystyle \big| \ind(\Sigma) \cdot ||\Sigma|| ^{-1} - ||S||^{-1}\big| < \frac{1}{A}$,
\hfill\refstepcounter{equation}

\item 
\hfill
$\displaystyle \mathbf{F}(\frac{[S]}{||S||}, \frac{[\Sigma]}{||\Sigma||}) < \frac{1}{\log(||\Sigma||)}$.
\hfill\refstepcounter{equation}

\end{itemize}
\end{itemize}

The set of metrics satisfying $(P_A)$ is denoted by $\mathcal{M}_A$. We want to show that for all $A>0$,  $\mathcal{M}_A$ is open and dense in the $C^\infty$ topology. Then $\bigcap_{A>0}  \mathcal{M}_A$ would be a $C^\infty$-generic family of metrics on $M$ in the sense of Baire and any metric in this family would satisfy the main theorem.

Fix $A>0$. If $g\in \mathcal{M}_A$ then by definition of $(P_A)$, by Sharp's compactness result \cite{Sharp17} and a standard Jacobi field argument,   there exists a small $\epsilon_0>0$ depending on $g$, such that no connected, closed, embedded, stable minimal hypersurface in $(M, g)$ has area lying inside $(A, A+\epsilon_0)$, and there are only finitely many such stable minimal hypersurfaces in $(M, g)$ with area at most $A+\epsilon_0$. Therefore, given $g\in \mathcal{M}_A$, any stable minimal hypersurface $S'$ under a small perturbed metric $g'$ with $\Area_{g'}(S')\leq A$ must come from a stable minimal hypersurface $S$ under $g$ with $\Area_g(S)\leq A$. Openness of $\mathcal{M}_A$ then follows from the Implicit Function Theorem.

To prove denseness of  $\mathcal{M}_A$, consider an immersed bumpy metric $g$ on $M$; (recall that the set of immersed bumpy metrics is dense by \cite[Theorem 2.2]{White91} and \cite[Theorem 2.1]{White17}). The set of connected, closed, embedded, stable, minimal hypersurfaces in $(M,g)$ with area at most $2A$ is finite. By slightly rescaling the metric, we can make sure that no such stable closed minimal hypersurface has area between $A-\epsilon_0$ and $A+\epsilon_0$ for some small $\epsilon_0>0$. All the deformations below will be chosen small enough to preserve that condition. Let us call those connected, closed, embedded, stable, minimal hypersurfaces with area less than $A$ which are $2$-sided by $\{S_1,...,S_J\}$, and those which are $1$-sided by $\{T_1,...,T_K\}$.  
We can now apply the deformation result Theorem \ref{T:deform} to each $S_j$ successively, and get a metric $g'\in \mathcal{M}_A$ arbitrarily close to $g$. More precisely, fix an $\epsilon>0$ and an integer $l>0$. We use Theorem \ref{T:deform} with $m>l$ very large depending on $A$ and $\epsilon$, on the stable hypersurface $S_1$ and we get a metric $g_1$ such that $||g-g_1||_{C^l}\leq \epsilon/J$, together with a non-degenerate minimal hypersurface $\Sigma_1$ satisfying the four bullets of Theorem \ref{T:deform} with $A$ (resp. $S_1$, $\Sigma_1$) replacing $m$ (resp. $S$, $\Sigma$). We can now apply Theorem \ref{T:deform} to the next stable hypersurface $S_2$ (deformed from $S_2$ and still denoted by $S_2$), and we get another metric $g_2$ with $||g-g_2||_{C^l}\leq 2\epsilon/J$ and a corresponding minimal hypersurface $\Sigma_2$. We can also ensure that $\Sigma_1$ deforms to a minimal hypersurface still denoted by $\Sigma_1$ in $(M, g_2)$ and still satisfying the four bullets of Theorem \ref{T:deform}. We continue to deform the metric for each subsequent $S_i$ ($i=3,...,J$) and eventually we get a metric $g_J$ with  $||g-g_J||_{C^l}\leq \epsilon$ and non-degenerate minimal hypersurfaces $\Sigma_1,...,\Sigma_J$. The stable hypersurfaces $S_1,...,S_J \subset (M,g)$ deform to stable hypersurfaces still denoted by $S_1,...,S_J$. One checks that if the deformation at each step is taken small enough,  the only connected, closed, embedded, stable hypersurfaces with area less than $A$ in $(M,g_J)$ are $S_1,...,S_J$ and the minimal hypersurfaces coming from $T_1,...,T_K$. Thus, $g_J\in \mathcal{M}_A$ and this concludes the proof of the denseness of $\mathcal{M}_A$.
\end{proof}

\subsection{Generic scarring in most closed Riemannian $3$-manifolds}

As a consequence of the proof of Theorem \ref{T:main}, we can show that scarring occurs for a generic metric on any closed $3$-manifold which does not admit a metric of constant positive curvature.

\begin{proof}[Proof of Theorem \ref{T:3dimension}]
Let $M^3$ be a closed $3$-manifold not diffeomorphic a quotient of the $3$-sphere. Then by the Geometrization Conjecture \cite{Perelman03, Kleiner-Lott08, Morgan-Tian14, Bessieres-Besson-Maillot-Boileau-Porti10, Morgan-Fong10} and the Virtually Haken Conjecture \cite[Theorems 9.1 and 9.2]{Agol13}, there is an oriented finite cover $\pi:M_{cover}\to M$, so that  $M_{cover}$ is either not irreducible or contains an incompressible $2$-torus or has non-vanishing second Betti number. Let $S_0 \subset M_{cover}$ be either a topologically non-trivial embedded $2$-sphere, or an {embedded} incompressible $2$-torus or an embedded closed surface representing a non-trivial element in the second homology group of $M_{cover}$, according to the case.

For a smooth metric $g$ on $M$, which lifts to a metric on $M_{cover}$ still denoted by $g$, one can minimize the area of $S_0$ in its isotopy class by Meeks-Simon-Yau \cite{Meeks-Simon-Yau82}, and get a connected, closed, smooth embedded, minimal surface $S_1\subset (M_{cover},g)$. By taking a double cover if necessary, we can assume that $S_1$ is $2$-sided stable.
Moreover, by White's Bumpy Metric Theorem \cite{White91, White17} and Transversality Theorem \cite{White19}, if the metric $g$ is well chosen, $S_1$ is strictly stable and the image $\pi(S_1)$ is a strongly self-transverse immersed surface, which implies that the surface is self-transverse and the multiplicity of the image $\pi(S_1)$ is at most $2$ at all points of $\pi(S_1)$ except maybe a finite number of points where three different tangent planes intersect transversely; (see \cite[Theorem 20, Theorem 21]{White19} for definitions of being strongly self-transverse). Let $\mathcal{M}_{\text{transverse}}$ be the set of metrics $g$ on $M$ such that there is a connected, closed, embedded, 2-sided, strictly stable, minimal surface $S_1\subset (M_{cover},g)$ such that $\pi(S_1)$ is strongly self-transverse. By \cite{White91, White17,White19}, $\mathcal{M}_{\text{transverse}}$ is open and dense.

We would like to apply the quantitative perturbation arguments used in the proof of Theorem \ref{T:main}. The idea is to generically construct minimal surfaces in $(M_{cover},g)$ and then project back to the original manifold $M$. However, in order to be able to project the minimal surfaces downstairs, we need to deform the metric $g$ in an equivariant way so that it projects to an honest metric on $M$. Let us see how we can prove the following analogue of Theorem \ref{T:deform}.

\vspace{1em}
\textbf{Fact:} \textit{Let $\pi:M_{cover} \to M$, $g\in \mathcal{M}_{\text{transverse}}$, and $S_1\subset (M_{cover},g)$ be as above. For any integer $m>0$, there is a {smooth} metric $g'$ with $\|g'-g\|_{C^m}<\frac{1}{m}$, which lift to the metric $g'$ on $M_{cover}$,
such that $\pi(S_1)$ deforms to an immersed minimal surface $S_{g'} \subset (M,g')$, and there is a non-degenerate, connected, closed, embedded, minimal surface $Z \subset (M_{cover},g')$ satisfying the following with respect to $g'$:}
\vspace{0.5em}
\begin{enumerate}[label=$(\roman*)$]
\addtolength{\itemsep}{0.7em}
\item 
\hfill
$\displaystyle m < ||Z||, \quad m< \ind(Z)$,
\hfill\refstepcounter{equation}

\item 
\hfill
$\displaystyle \mathbf{F}(\frac{[S_{g'}]}{||S_{g'}||}, \frac{[\pi(Z)]}{||\pi(Z)||}) < \frac{1}{m}$.
\hfill\refstepcounter{equation}
\end{enumerate}
\vspace{0.5em}
\textit{Here $\pi(Z)$ denotes as usual the image of $Z$ by the projection $\pi$, $||.||$ denotes the area and $[\pi(Z)]$ is the multiplicity one integer rectifiable varifold associated to the image $\pi(Z)$.} 
\vspace{1em}

Once this Fact is checked, one can argue as follows to conclude. Since the subset of metrics $\mathcal{M}_{\text{transverse}}$ is dense, it is enough to prove the theorem in a small neighborhood of any metric $g\in  \mathcal{M}_{\text{transverse}}$. Given such a $g$, let $\pi(S_1)$ be the associated strongly self-transverse minimal surface constructed in the first paragraph. There is a $C^\infty$-neighborhood $\mathcal{U}$ of $g$ so that for any metric $g''\in \mathcal{U}$, the minimal surfaces $\pi(S_1) \subset (M,g)$ and $S_1\subset (M_{cover},g)$ deform respectively uniquely to some strictly stable minimal surfaces $S_{g''}$ and $\tilde{S}_{g''}$, and $S_{g''}$ remains strongly self-transverse (which is an open condition). For any $m>0$, let $\mathcal{M}'_{\mathcal{U},m}$ be the set of metrics $g'\in \mathcal{U}$ such that there is a non-degenerate, connected, closed, embedded, minimal surface $Z\subset  (M_{cover},g')$ with 
$$\Area(Z)>m, \quad \ind(Z)>m,$$
$$\mathbf{F}(\frac{[S_{g'}]}{||S_{g'}||}, \frac{[\pi(Z)]}{||\pi(Z)||}) < \frac{1}{m}.$$
Then $\mathcal{M}'_{\mathcal{U},m}$ is open by the Implicit Function Theorem, and dense in $\mathcal{U}$ by the Fact. By taking the intersection $\bigcap_{m>0} \mathcal{M}'_{\mathcal{U},m}$, one concludes for the neighborhood $\mathcal{U}$ (note that the degree $\deg_\pi$ of the covering map $\pi:M_{cover}\to M$ is fixed so the area of $\pi(Z)$ is at least $m/\deg_\pi$), and thus one finishes the proof.

It remains to verify the Fact. Let $g\in \mathcal{M}_{transverse}$ as before. Similarly to the proof of Theorem \ref{T:deform}, the idea is to perturb the metric $g$ and use the cylindrical Weyl law to find $\Sigma$. However, in the present case, the fact that we work on a cover will cause some issues.  On the other hand, embeddedness of the stable surface was previously used to construct effective deformations, and here we do not require any quantitative estimates on how close $\Sigma$ is to $S_{g'}$ depending on the area of $\Sigma$. Let $h:=\varphi g$ be a nonnegative symmetric $2$-tensor on $M$, where $\varphi:M\to [0,1]$ is a smooth cut-off function supported in a $1/(2k)$-neighborhood of the immersed surface $\pi(S_1)$ , and $\varphi=1$ in a smaller tubular neighborhood of $\pi(S_1)$. This tensor lifts to $\hat{\varphi}g$ on $M_{cover}$ still called $h$ by abuse of notations, where $\hat\varphi$ is equal to $1$ on a neighborhood of $S_1$. $\hat\varphi$ is not supported in a $1/(2k)$-neighborhood of $S_1$, but instead is supported in the $1/(2k)$-neighborhood of the union of the lifts of $\pi(S_1)$ that we call
$$\mathfrak{X} : =\pi^{-1}\big(\pi(S_1)\big).$$ 
The set $\mathfrak{X} $ is a finite union of embedded strictly stable minimal surfaces, and it is strongly self-transverse.
Let $T_1,...,T_L$ be a maximal disjoint family of connected lifts of $\pi(S_1)$ embedded in $M_{cover}$, meaning that any lift of $\pi(S_1)$ has to intersect one of the $T_i$. After cutting $(M_{cover},g)$ along $T_1\cup...\cup T_L$ and choosing a connected component, we are left with a compact manifold $N_{g}$ with strictly stable boundary.
Consider the $1$-parameter family of metrics on $M_{cover}$: 
$$g_t:=g + t{h}.$$ 
We can slightly perturb $g_t$ to a family $\tilde{g}_t$ of $C^q$ metrics transversal with respect to White's projection $\Pi$ (on $M_{cover}$). Note that $\tilde{g}_t$  may not be equivariant and may not descend to a metric on $M$. For any metric $g''$ close enough to $g$, the $T_i$ deform to some nearby surfaces, so $N_{g}$ deforms to a compact manifold $N_{g''}$ with strictly stable boundary.  Similarly $\mathfrak{X}$ deforms to $\mathfrak{X}_{g''}$, which remains strongly self-transverse.

Fix an integer $m>0$. We can use the same arguments as in the proof of Theorem \ref{T:deform} applied to $N_{\tilde{g}_t}$. Note that here the function
$$\max\{\Area_{\tilde{g}_t}(T); \text{ $T$ is a connected component of $\partial N_{\tilde{g}_t}$}\}$$
may not be differentiable. Remember that this number is the leading coefficient in the cylindrical Weyl law of the min-max widths $\{\tilde{\omega}_p(N_{\tilde{g}_t},{\tilde{g}_t})\}$; see Theorem \ref{T:cylindrical Weyl Law}. Nevertheless it is Lipschitz in $t$ so differentiable almost everywhere and the derivative can easily be estimated.  Apart from that point, the arguments of the proof of Theorem \ref{T:deform} are unchanged. We find for a large $k_0$, a small $t_{k_0}$, a metric $\tilde{g}_{s}$ close to $g_{s}$ for some $s\in [0, t_{k_0}]$, and a large $p_{k_0}$ (we do not need to control how large compared to $k_0$), so that there is a non-degenerate, closed, embedded, minimal surface $\Sigma\subset (N_{\tilde{g}_s},\tilde{g}_{s})$ of Morse index $p_{k_0}$, disjoint from the boundary $\partial N_{\tilde{g}_s}$ such that 
\[ \quad\quad  \big|\frac{1}{||{\Si}||}\int_{{\Si}} \hat{\varphi} d{\Si}-1 \big| \]
is arbitrarily small depending on the choice of $k_0$.
Since $\hat{\varphi}$ is supported in a $1/(2k_0)$-neighborhood of $\mathfrak{X}_{\tilde{g}_s}$, ${\Sigma}$ is mostly supported in a $1/k_0$-neighborhood of $\mathfrak{X}_{\tilde{g}_s}$: in other words
\begin{equation}\label{ff}
\frac{||{\Sigma}\backslash N_{1/k_0}(\mathfrak{X}_{\tilde{g}_s})||}{||{\Sigma}||}
\end{equation}
can be made arbitrarily small if $k_0$ was taken large enough. 

Note that, by the usual Constancy Theorem and by strong-transversality of $\mathfrak{X}_{\tilde{g}_s}$, any stationary varifold $V$ supported on $\mathfrak{X}_{\tilde{g}_s}$ is a union of constant multiples of lifts of  $S_{\tilde{g}_s}$. If moreover $V$ is a stationary varifold in $(N_{\tilde{g}_s}, \tilde{g}_s)$, then by construction of $N_{\tilde{g}_s}$, $V$ has to be supported in the boundary $\partial N_{\tilde{g}_s}$.
So by a compactness argument, the minimal surface $\Sigma$ can be chosen so that $\frac{[{\Sigma}]}{||{\Sigma}||}$ is arbitrarily close to a stationary varifold supported in $\partial N_{\tilde{g}_s}$ if $k_0$ was large enough.

In all these arguments, it can be checked that the perturbation $\{\tilde{g}_t\}$ can be chosen arbitrarily close to $\{g_t\}$ in the $C^\infty$ topology, independently of $k_0$. It means that for some large but uniform $k_0$, $t_{k_0}^{-1}$ and $p_{k_0}$, the above conclusions (i.e. the construction of a $\Sigma$ with some properties) hold for all small enough perturbations $\{\tilde{g}_t\}$. Then by compactness \cite{Sharp17} (letting the perturbations $\{\tilde{g}_t\}$ converging back to $\{g_t\}$), we get an $s\in [0,1]$ and a limit minimal surface $Z$ in $(N_{g_s},g_s)$.  Note that no component of $Z$ is contained in $\partial N_{g_s}$; in fact, if this was not true, a sequence of connected minimal surfaces (in some $(N_{\tilde{g}_s},\tilde{g}_{s})$) would converge to $\partial N_{g_s}$ in the Hausdorff distance (by a standard argument using the monotonicity formula), which contradicts the maximum principle as a neighborhood of $\partial N_{g_s}$ has a mean convex foliation. Moreover,   
$\frac{[Z]}{||Z||}$ is arbitrarily close to a stationary varifold supported in $\partial N_{g_s}$ if $k_0$ was large enough. 
Note that $Z$ could a priori be degenerate, since $g_s$ is in general not bumpy in any sense. However $g_s$ is smooth, equivariant and descends to a metric on $M$.   By a counting argument, we can take a connected component of $Z$, still denoted as $Z$, such that the quantity~\eqref{ff} is arbitrarily small when $k_0$ is large enough. Therefore, we can make sure that the connected immersed minimal surface $\pi(Z)$ satisfies
\[ \mathbf{F}(\frac{[S_{g_s}]}{||S_{g_s}||},\frac{[\pi(Z)]}{||\pi(Z)||}) <\frac{1}{m}. \]

By a conformal perturbation of $g_s$, we can make $Z$ non-degenerate; (see for instance \cite[Proof of Lemma 2.6]{White17}). It only remains to prove that the area and Morse index of $Z$ can be chosen arbitrarily large (if $k_0$ was chosen large enough). {The area of $Z$ has to be large by the maximum principle; indeed if $Z$ had uniformly bounded area as $k_0$ gets larger,  $Z$ would be close to a union of boundary components of $\partial N_{{g}_s}$ in the Hausdorff topology by monotonicity formula, and this contradicts the strict stability of $\partial N_{{g}_s}$.} 

As for the Morse index of $Z$, it follows from \cite[Theorem 1.17]{Chodosh-Ketover-Maximo17}.  The details were written in \cite{Collin-Hauswirth-Mazet-Rosenberg18} for a special case. 
We can sketch the idea as follows. Suppose towards a contradiction that $Z$ cannot be chosen to have arbitrarily large Morse index (when taking $k_0$ large), then we would be able to construct a sequence metrics $g_{s^{(j)}}$ and a sequence of closed embedded minimal hypersurfaces $Z^{(j)}\subset (N_{g_{s^{(j)}}},g_{s^{(j)}})$, such that 
\begin{enumerate}
\item $g_{s^{(j)}}$ converges smoothly to $g$ as $j\to \infty$,
\item $\frac{[Z^{(j)}]}{||Z^{(j)}||}$ converges (with respect to $g$) to a varifold supported in the boundary $\partial N_g$,
\item the area of $Z^{(j)}$ goes to infinity as $j\to \infty$,
\item but the index of $Z^{(j)}$ stays uniformly bounded as $j\to \infty$.
\end{enumerate}
Next by bullet (4), $Z^{(j)}$ converges to a smooth minimal lamination of $(N_g,g)$, and a connected component $\partial N_g$ has to be a leaf of this lamination because of bullet (2) above and the Constancy Theorem.  In \cite{Collin-Hauswirth-Mazet-Rosenberg18} the authors studied bounded index minimal hypersurfaces converging to a minimal lamination staying ``on one side'' of a certain closed minimal leaf. Their arguments imply here that for any $\varepsilon>0$, for $j$ large, there is a closed connected component of $Z^{(j)}$ contained in the $\varepsilon$-tubular neighborhood of $\partial N_{g_{s^{(j)}}}$. This contradicts the maximum principle since $\partial N_{g_{s^{(j)}}}$ is strictly stable with respect to $g_{s^{(j)}}$.

The Fact is proved and this finishes the proof of Theorem \ref{T:3dimension}.
\end{proof}

For closed hyperbolic $3$-manifolds, we get a stronger result:

\begin{corollary} \label{C:hyperbolic}
If $M^3$ admits a hyperbolic metric, then for a generic metric $g$ on $M$, any $\pi_1$-injective closed surface has an area minimizing representative in its homotopy class, which is the scarring limit of a sequence of closed, immersed, minimal surfaces with area and Morse index diverging to infinity.

\end{corollary}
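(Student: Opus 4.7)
The plan is to combine the LERF (subgroup separability) property of closed hyperbolic $3$-manifold groups, established by Agol--Wise, with the perturbation argument from Theorem \ref{T:3dimension}, applied once per homotopy class of $\pi_1$-injective surface. Fix such a surface $F\subset M^3$. By LERF the subgroup $\pi_1(F)\subset \pi_1(M)$ is separable, so there is a finite cover $\pi_F:M_F\to M$ in which $F$ lifts to an embedded closed surface $F_0$; after passing to a further finite cover we may assume $M_F$ is orientable and $F_0$ is two-sided. For any smooth metric $g$ on $M$, lifted to $M_F$, Meeks--Simon--Yau yields a connected, closed, embedded, $2$-sided, stable minimal surface $S_g^F\subset (M_F,g)$ minimizing area in the isotopy class of $F_0$. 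The image $\pi_F(S_g^F)\subset (M,g)$ is then a minimal immersed area-minimizer in the homotopy class of $F$: any competitor in $M$ homotopic to $F$ lifts to a competitor in $M_F$ in the isotopy class of $F_0$, and the covering degree cancels under lift/projection of areas.

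By White's Bumpy Metric and Transversality Theorems \cite{White91, White17, White19}, for a $C^\infty$-generic metric $g$ on $M$, the lifted stable minimizer $S_g^F$ is strictly stable and $\pi_F(S_g^F)$ is strongly self-transverse, giving the analogue of the set $\mathcal{M}_{\textrm{transverse}}$ of Theorem \ref{T:3dimension} relative to the class $[F]$. One can then apply the Fact established inside the proof of Theorem \ref{T:3dimension} to the triple $(M_F,\pi_F,S_g^F)$: a quantitative perturbation of $g$ using the cylindrical Weyl law on $M_F$, cut along a maximal disjoint family of lifts of $\pi_F(S_g^F)$, produces a non-degenerate, connected, closed, embedded minimal surface $Z\subset M_F$ disjoint from the cut, with arbitrarily large area and Morse index, such that the normalized varifold $[\pi_F(Z)]/\|\pi_F(Z)\|$ is arbitrarily close to $[\pi_F(S_g^F)]/\|\pi_F(S_g^F)\|$. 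As at the end of Theorem \ref{T:3dimension}, for each integer $m$ this yields an open dense subset $\mathcal{M}_{F,m}$ of smooth metrics on $M$ producing such a $Z$ with area and Morse index at least $m$, and the intersection $\mathcal{M}_F:=\bigcap_m \mathcal{M}_{F,m}$ is $C^\infty$-residual.

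Finally, the set of conjugacy classes of surface subgroups in the countable group $\pi_1(M)$ is itself countable; enumerating them as $\{[F_i]\}_{i\in\N}$ and setting $\mathcal{R}:=\bigcap_{i\in \N} \mathcal{M}_{F_i}$ gives a $C^\infty$-residual set of smooth metrics in which every $g$ satisfies the claimed scarring conclusion for every $\pi_1$-injective class. The main obstacle lies in the second step: since the area-minimizer in the homotopy class of $F_0$ is generally not unique and may not depend continuously on $g$, we must localize the perturbation argument near a fixed generic metric where strict stability and strong self-transversality simultaneously hold, and use the Implicit Function Theorem to propagate these properties through small perturbations; this is also why, as in Theorem \ref{T:3dimension}, we cannot expect the quantitative index/area asymptotics of Theorem \ref{T:main}.
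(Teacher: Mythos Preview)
Your argument follows the paper's essentially line for line: use LERF (Agol) to lift each $\pi_1$-injective $F$ to an embedded two-sided surface in a finite cover, run the scarring argument from the proof of Theorem~\ref{T:3dimension} in that cover, and intersect over the countably many conjugacy classes of surface subgroups.

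The one substantive difference is how you produce the area-minimizing representative. You minimize in the \emph{isotopy} class of $F_0$ in $M_F$ via Meeks--Simon--Yau and project down, whereas the paper minimizes directly in the \emph{homotopy} class of $F$ in $M$ via Schoen--Yau \cite{Schoen-Yau79} or Sacks--Uhlenbeck \cite{Sacks-Uhlenbeck82} and then lifts. Your justification that $\pi_F(S_g^F)$ minimizes area in the homotopy class of $F$ has a gap as written: a competitor $\Sigma\simeq F$ in $M$ lifts (via the lifting criterion) to a \emph{map} into $M_F$ homotopic to $F_0$, not to an embedded surface isotopic to $F_0$, so the Meeks--Simon--Yau minimizer does not directly bound its area. (The ``covering degree cancels'' remark is also misleading: lifting a map through the subgroup inclusion preserves area exactly, with no degree factor involved.) You can close this gap either by invoking Freedman--Hass--Scott (least-area $\pi_1$-injective maps in irreducible $3$-manifolds are realized by embeddings, so the isotopy minimizer coincides with the homotopy minimizer in $M_F$), or, more directly, by following the paper and producing the immersed minimizer in $M$ first and then taking an embedded lift. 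Either route leaves the rest of your argument intact.
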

\begin{proof}
If $M$ admits a hyperbolic metric, then by Kahn-Markovi\'{c} \cite{Kahn-Markovic12a,Kahn-Markovic12b}, there are $\pi_1$-injective surfaces in $M$. By Agol \cite{Agol13}, $\pi_1(M)$ is LERF so each of these $\pi_1$-injective surfaces lifts to an embedded $2$-sided surface in a finite cover of $M$; (see for instance \cite{Matsumoto02}). Given a $\pi_1$-injective surface $F$ and any metric $g$ on $M$, we get an immersed area minimizing surface $S\subset (M, g)$ in its homotopy class by Schoen-Yau \cite{Schoen-Yau79} or  Sacks-Uhlenbeck \cite{Sacks-Uhlenbeck82}. Let $S_1$ be an embedded $2$-sided stable lift of $S$ in a finite cover $M_{cover}$. By inspecting the proof of Theorem \ref{T:3dimension}, we see that the arguments apply to $S_1\subset M_{cover}$ so for a generic smooth metric on $M$, there is a sequence of immersed minimal surfaces scarring along an area minimizing surface $S$ in the homotopy class of $F$. By taking a countable intersection of sets of metrics, we obtain a generic set of smooth metrics for which any oriented $\pi_1$-injective surface has an area minimizing representative which is the scarring limit of a sequence of immersed minimal surfaces.
\end{proof}

\appendix

\section{Differentiability of the first eigenvalue}
\label{A:Differentiability of the first eigenvalue}

For the reader's convenience, we give a proof of the known fact that the first eigenvalue of a self-adjoint elliptic operator depends in a differentiable way on the coefficients; (see \cite{Uhlenbeck76} for related results).
Let $\Sigma$ be a smooth closed $n$-manifold.  Fix two integers $r\geq 2$, $k\geq1$. Let $U$ be a smooth Banach manifold and let $\Phi: U \to  \big(C^r(\Sigma)\big)^{n^2+n+1} $ be a $C^k$ map which associates to any $\gamma\in U$ a triple $\Phi(\gamma) = \big((a^{ij})_{1\leq i, j \leq n}, (b^i)_{1\leq i\leq n}, c\big)$, where $a^{ij}, b^i, c\in C^r(\Sigma)$, such that $(a^{ij})>0$ is positive definite and such that the following elliptic operator  
\[ L_{\gamma}u := a^{ij}u_{ij} + b^i u_i + cu\]
is self-adjoint with respect to a volume measure depending on $\gamma\in U$.

A number $\lambda_1(\gamma)$ is the first eigenvalue of $L_{\gamma}$ if and only if 
\[ L_{\gamma} \varphi_1 = -\lambda_1(\gamma) \varphi_1 \]
for some $\varphi_1 \in H^{1}(\Sigma)\cap C^0(\Sigma)$ with $\varphi_1>0$. The eigenfunctions of $L_{\gamma}$ span $L^2(\Sigma)$ and are in $C^r(\Sigma)$.

\begin{lemma}
\label{L:Differentiability of the first eigenvalue}
$\lambda_1(\gamma)$ is a $C^k$ function of $\gamma\in U$.
\end{lemma}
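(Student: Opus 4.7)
The strategy is a standard application of the Implicit Function Theorem in Banach spaces. Since $L_{\gamma_0}$ has a positive eigenfunction $\varphi_0$ with eigenvalue $-\lambda_1(\gamma_0)$, and $L_{\gamma_0}$ is self-adjoint, the first eigenvalue $\lambda_1(\gamma_0)$ is simple (standard for self-adjoint elliptic operators with a positive first eigenfunction). This simplicity is what allows us to perturb.

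Fix $\gamma_0 \in U$, and normalize $\varphi_0$ so that $\langle \varphi_0, \varphi_0 \rangle = 1$ in the fixed Hilbert space $L^2(\Sigma)$ (here we can use any fixed measure, e.g.\ the one associated to $\gamma_0$). Define the map
\[ F : U \times C^{r}(\Sigma) \times \mathbb{R} \to C^{r-2}(\Sigma) \times \mathbb{R}, \qquad F(\gamma, \varphi, \lambda) = \bigl( L_\gamma \varphi + \lambda \varphi,\, \langle \varphi, \varphi_0 \rangle - 1 \bigr). \]
Because $\Phi: U \to (C^r(\Sigma))^{n^2+n+1}$ is $C^k$, and because multiplication $(a, u) \mapsto a u$ from $C^r \times C^r \to C^r$ and $C^r \times C^{r-2} \to C^{r-2}$ is smooth (bilinear and continuous), the map $F$ is $C^k$. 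Clearly $F(\gamma_0, \varphi_0, \lambda_1(\gamma_0)) = 0$.

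The key step is verifying that the partial derivative $D_{(\varphi,\lambda)} F$ at $(\gamma_0, \varphi_0, \lambda_1(\gamma_0))$, namely
\[ (v, \mu) \mapsto \bigl( (L_{\gamma_0} + \lambda_1(\gamma_0) I) v + \mu \varphi_0,\, \langle v, \varphi_0 \rangle \bigr), \]
is a linear isomorphism $C^r(\Sigma) \times \mathbb{R} \to C^{r-2}(\Sigma) \times \mathbb{R}$. For injectivity, if $(L_{\gamma_0} + \lambda_1 I) v + \mu \varphi_0 = 0$ and $\langle v, \varphi_0\rangle = 0$, then pairing with $\varphi_0$ and using self-adjointness with $L_{\gamma_0} \varphi_0 = -\lambda_1 \varphi_0$ gives $\mu = 0$; then $v$ is a first eigenfunction orthogonal to $\varphi_0$, so $v = 0$ by simplicity. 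For surjectivity, given $(f, a)$, set $\mu := \langle f, \varphi_0\rangle$ so that $f - \mu \varphi_0 \in \varphi_0^\perp$; by standard Fredholm theory for self-adjoint elliptic operators, $L_{\gamma_0} + \lambda_1 I$ is Fredholm of index $0$ with kernel and cokernel both spanned by $\varphi_0$, hence restricts to an isomorphism from $\varphi_0^\perp \cap C^r(\Sigma)$ onto $\varphi_0^\perp \cap C^{r-2}(\Sigma)$. Solving there and adding $a \varphi_0$ yields the preimage.

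By the Implicit Function Theorem in Banach spaces, there exist a neighborhood $V$ of $\gamma_0$ in $U$ and $C^k$ maps $\gamma \mapsto (\varphi(\gamma), \lambda(\gamma))$ with $(\varphi(\gamma_0), \lambda(\gamma_0)) = (\varphi_0, \lambda_1(\gamma_0))$, solving $F(\gamma, \varphi(\gamma), \lambda(\gamma)) = 0$. It remains to identify $\lambda(\gamma)$ with $\lambda_1(\gamma)$: since $\varphi(\gamma) \to \varphi_0$ in $C^r(\Sigma)$ (hence in $C^0$) as $\gamma \to \gamma_0$ and $\varphi_0 > 0$, the function $\varphi(\gamma)$ remains strictly positive on $V$ (perhaps after shrinking $V$), so $\lambda(\gamma)$ is an eigenvalue of $L_\gamma$ with a positive eigenfunction, which forces $\lambda(\gamma) = \lambda_1(\gamma)$. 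This shows $\lambda_1$ is $C^k$ on $V$, and since $\gamma_0$ was arbitrary, on all of $U$. The main technical point to verify carefully is the isomorphism property of $D_{(\varphi,\lambda)} F$, but this is essentially forced by simplicity of $\lambda_1(\gamma_0)$ and the self-adjoint Fredholm alternative.
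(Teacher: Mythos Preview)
Your approach is essentially the same as the paper's: apply the Implicit Function Theorem to the map $(\gamma,\varphi,\lambda)\mapsto L_\gamma\varphi+\lambda\varphi$ together with a normalization, using simplicity of the first eigenvalue to verify the linearization is an isomorphism, and then identify the resulting eigenvalue branch as $\lambda_1$ via positivity of the eigenfunction. The paper normalizes by restricting $\varphi$ to the $L^2$-unit sphere in $H^r(\Sigma)$, whereas you impose the affine constraint $\langle\varphi,\varphi_0\rangle=1$; these are equivalent ways of killing the scaling freedom.

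There is, however, a technical issue with your choice of function spaces. You work with $C^r(\Sigma)$ and $C^{r-2}(\Sigma)$ for an \emph{integer} $r$, and invoke ``standard Fredholm theory for self-adjoint elliptic operators'' to conclude that $L_{\gamma_0}+\lambda_1 I$ restricts to an isomorphism $\varphi_0^\perp\cap C^r\to \varphi_0^\perp\cap C^{r-2}$. But second-order elliptic operators are generally \emph{not} Fredholm between integer H\"older spaces: already $\Delta:C^2\to C^0$ fails to have closed range, since there exist continuous $f$ for which $\Delta u=f$ has no $C^2$ solution (the classical failure of Schauder estimates at integer orders). So your surjectivity step, as written, does not go through.

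The fix is immediate: replace $C^r$ and $C^{r-2}$ by Sobolev spaces $H^r$ and $H^{r-2}$ (this is what the paper does), or by H\"older spaces $C^{r,\alpha}$ and $C^{r-2,\alpha}$ with $0<\alpha<1$. In either setting the Fredholm alternative is standard, your isomorphism argument is then correct, and the rest of the proof is unchanged.
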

\begin{proof}
Fix a metric $g_0$ on $\Sigma$ and let $S^{r}(\Sigma)$ be the space of functions $u\in H^{r}(\Sigma)$ with $\|u\|_{L^2(\Sigma,g_0)}=1$, where the $L^2$-norm is computed with $g_0$. Note that $H^{r}(\Sigma)$ does not depend on the metric. Consider the operator $T: U\times \R \times S^{r}(\Sigma) \to H^{r-2}(\Sigma)$ defined by
\[ T\big(\gamma, \mu, u\big) = L_{\gamma}u + \mu u. \]
Since $\Phi:U \to \big(C^r(\Sigma)\big)^{n^2+n+1}$ is a $C^k$ map by assumption, $T$ is also $C^k$-differentiable in its variables.
We have $T\big(\gamma, \lambda, \varphi\big)=0$ if and only if $\lambda$ is an eigenvalue of $L_{\gamma}$ and $\varphi$ is an associated normalized eigenfunction. An eigenvalue is simple if the associated eigenspace is one dimensional. It is known that the first eigenvalue of $L_{\gamma}$ is always simple. Given $\gamma_1 \in U$, consider the first eigenvalue $\lambda_1$ of $L_{\gamma_1}$ and a normalized positive first eigenfunction $\varphi_1\in S^{r}(\Sigma)$, and then consider the following differential
\[ D_{(\mu, u)} T|_{(\gamma_1, \lambda_1, \varphi_1)}(s, v) = L_{\gamma_1}v + \lambda_1 v + s \varphi_1, \]
where $s\in \mathbb R$, $v\in \textrm{span}\{\varphi_1\}^{\perp_{g_0}} = \{ u\in H^{r}(\Sigma), \int_{\Sigma} u \varphi_1d\text{vol}_{g_0}=0\}$. We know that the null space of $L_{\gamma_1}+\lambda_1$ is $\textrm{span}\{\varphi_1\}$. Moreover since  $L_{\gamma_1} +\lambda_1$ is self adjoint for a volume measure $\nu$, and since $\lambda_1$ is simple, the image of $\textrm{span}\{\varphi_1\}^{\perp_{g_0}}$ by this operator is  $\{ u\in H^{r-2}(\Sigma), \int_{\Sigma} u \varphi_1 d\nu=0\}$.
Hence
\[ D_{(\mu, u)} T|_{(\gamma_1, \lambda_1, \varphi_1)}: \mathbb R\times \textrm{span}\{\varphi_1\}^{\perp_{g_0}} \to H^{r-2}(\Sigma) \]
is an isomorphism. By the Implicit Function Theorem, near $\gamma_1$, there exist $C^k$ maps:
\[ \gamma \to \lambda(\gamma)\in \R \quad \text{and} \quad \gamma\to \varphi(\gamma)  \in H^{r}(\Sigma)\]
with 
\[ \lambda(\gamma_1)=\lambda_1 \quad \text{and} \quad \varphi(\gamma_1)= \varphi_1, \]
such that $L_{\gamma}\varphi(\gamma) = -\lambda(\gamma)\varphi(\gamma)$. Note that $\varphi(\gamma)$ is also positive when $\gamma$ is close enough to $\gamma_1$, so $\lambda(\gamma)$ is indeed the first eigenvalue of $L_\gamma$. The conclusion then follows.
\end{proof}

\bibliography{refs}
\bibliographystyle{plain}

\end{document}